\documentclass[11pt]{amsart}
\usepackage{amssymb, amsmath}
\usepackage{amsthm}
\usepackage{amscd}
\usepackage{graphicx}
\newtheorem{theorem}{Theorem}
\newtheorem{lemma}[theorem]{Lemma}

\newtheorem{question}[theorem]{Question}
\newtheorem{corollary}[theorem]{Corollary}
\newtheorem{proposition}[theorem]{Proposition}
\newtheorem{conjecture}[theorem]{Conjecture}
\theoremstyle{definition}

\newtheorem*{defn}{Definition}

\newtheorem*{example}{Example}
\newtheorem*{remark}{Remark}
\newtheorem*{acknowledgement}{Acknowledgement}

\title[$\pi_1$ of finite volume negatively curved $4$-manifold is not a $3$-manifold group]{Fundamental groups of finite volume,  bounded negatively curved $4$-manifolds are not $3$-manifold groups}
\author{Grigori Avramidi, T. T$\hat{\mathrm{a}}$m Nguy$\tilde{\hat{\mathrm{e}}}$n-Phan, Yunhui Wu}

\address{Grigori Avramidi\\
1027 Emerson Ave\\
Salt Lake City, UT 84105}
\email{gavramid@math.utah.edu}

\address{T. T$\hat{\mathrm{a}}$m Nguy$\tilde{\hat{\mathrm{e}}}$n-Phan\\
Department of Mathematics\\
Binghamton University\\
State University of New York\\
4400 Vestal Parkway East
Binghamton, NY 13902}
\email{tam@math.binghamton.edu}

\address{Yunhui Wu\\
Department of Mathematics\\
 Rice University, 6100 Main St\\
 Houston, TX 77005}
\email{yw22@rice.edu}

\def\ra{\rightarrow}

\def\beqa{\begin{eqnarray}}
\def\eeqa{\end{eqnarray}}
\def\beqa{\begin{eqnarray}}
\def\eeqa{\end{eqnarray}}
\def\to{\mapsto}

\DeclareMathOperator{\vol}{Vol}

\DeclareMathOperator{\actdim}{ad}

\def\Z{\mathbb{Z}}

\def\M{\overline{M}}

\oddsidemargin=0in
\evensidemargin=0in
\textwidth=41em

\input xy
\xyoption{all}
\begin{document}
\begin{abstract}
We study noncompact, complete, finite volume, Riemannian $4$-manifolds $M$ with sectional curvature $-1<K<0$. We prove that $\pi_1 M$ cannot be a $3$-manifold group. A classical theorem of Gromov says that $M$ is homeomorphic to the interior of a compact manifold $\M$ with boundary $\partial\overline M$. We show that for each $\pi_1$-injective boundary component $C$ of $\M$, the map $i_*$ induced by inclusion $i\colon C\rightarrow \M$ has infinite index image $i_*(\pi_1 C)$ in $\pi_1 \M$. We also prove that $M$ cannot be homotoped to be contained in $\partial\M$. 
\end{abstract}
\maketitle

\section{Introduction}
Let $M$ be a connected, complete, Riemannian manifold with finite volume and bounded sectional curvature $-1<K<0$. We study such manifolds $M$ and their fundamental groups in dimension $4$. The main result of this paper is the following.
\begin{theorem}\label{pi_1(M)}
Let $M$ be a complete, finite volume, Riemannian $4$-manifold of bounded negative curvature $-1<K<0$. Then $\pi_1(M)$ is not the fundamental group of a $3$-manifold.
\end{theorem}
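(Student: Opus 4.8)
The plan is to phrase the hypothesis group-theoretically and then obtain a Poincar\'e-duality contradiction. Set $\Gamma=\pi_1(M)=\pi_1(\overline{M})$. Since the universal cover $\widetilde{M}$ is a Hadamard manifold on which $\Gamma$ acts freely and properly, $\Gamma$ is torsion-free and $\overline{M}$ is aspherical; compactness of $\overline{M}$ makes $\Gamma$ finitely generated. By Gromov's compactification each end of $M$ is a cusp with closed infranilmanifold cross-section, so every boundary component $C_i$ of $\overline{M}$ is a closed aspherical $3$-manifold, is $\pi_1$-injective (the cusp subgroups are maximal parabolics), and its image $P_i=i_*\pi_1(C_i)$ is a virtually nilpotent $\mathrm{PD}_3$ subgroup of $\Gamma$. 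Hence $(\overline{M},\partial\overline{M})$ is an aspherical $\mathrm{PD}_4$ pair with nonempty boundary, so $\cd\Gamma=3$. Finally, finite volume together with $-1<K<0$ forces a nonempty thick part, so $\Gamma$ is non-elementary (its limit set is all of $\S^3$) and in particular is not virtually nilpotent. I would keep these facts in hand: $\Gamma$ is torsion-free, finitely generated, has $\cd\Gamma=3$, has virtually nilpotent $\mathrm{PD}_3$ peripheral subgroups, and is non-elementary.

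Next I would assume $\Gamma\cong\pi_1(N)$ for a connected $3$-manifold $N$ and reduce to the case that $N$ is closed and aspherical, i.e. that $\Gamma$ is a $\mathrm{PD}_3$ group. By the Scott core theorem I may take $N$ compact with $\pi_1 N\cong\Gamma$. If $\partial N\neq\emptyset$ then $N$ collapses to a $2$-complex and $\cd\Gamma\le 2$, contradicting $\cd\Gamma=3$; so $N$ is closed. The prime decomposition expresses $\Gamma$ as a free product over the prime summands, so to conclude that $N$ is prime I need $\Gamma$ to be one-ended. This is the step where the geometry is essential, and the one I would work hardest to justify: using that the limit set of $\Gamma$ is the connected sphere $\S^3$, I would show that the neutered space on which $\Gamma$ acts cocompactly is one-ended, so that $\Gamma$ admits no nontrivial free splitting. (In dimension $2$, where the cross-sections are circles, this fails and $\Gamma$ is free; the argument genuinely needs cross-sections of dimension $\ge 2$.) One-endedness makes $N$ prime, torsion-freeness rules out $\S^2\times\S^1$, and an irreducible closed $3$-manifold with infinite $\pi_1$ is aspherical, so $\Gamma$ is a $\mathrm{PD}_3$ group.

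The contradiction is then short. The peripheral subgroup $P_i$ is a $\mathrm{PD}_3$ subgroup of the $\mathrm{PD}_3$ group $\Gamma$; by Strebel's theorem a subgroup of infinite index in a $\mathrm{PD}_n$ group has cohomological dimension $<n$, so $P_i$ must have finite index in $\Gamma$. But then $\Gamma$ is commensurable with the virtually nilpotent group $P_i$, hence is itself virtually nilpotent, contradicting non-elementariness. (This last point is exactly the infinite-index statement advertised in the abstract, specialized to the case at hand.) The step I expect to be the main obstacle is the one-endedness of $\Gamma$: once $\Gamma$ is known to be the fundamental group of a closed aspherical $3$-manifold the rest is formal, so the real work — and the only place the finite-volume, pinched negatively curved hypothesis enters essentially — is in showing that a cusped negatively curved $4$-manifold has freely indecomposable fundamental group.
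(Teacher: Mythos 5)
There is a genuine gap, and it is located exactly where you placed all of your foundational facts: the first paragraph. Under the paper's hypothesis the curvature is only \emph{bounded} negative, $-1<K<0$, with no pinching $K<-\varepsilon<0$. Gromov's finiteness theorem then gives only that $M$ is the interior of some compact manifold-with-boundary $(\M,\partial\M)$; it gives no control whatsoever on the topology of $\partial\M$. The statements you take in hand --- that the ends are cusps with closed infranil cross-sections, that each boundary component is aspherical and $\pi_1$-injective with image a maximal parabolic, hence that $\Gamma$ has virtually nilpotent $\mathrm{PD}_3$ peripheral subgroups --- are consequences of the Margulis lemma/thick-thin decomposition, which requires pinched negative curvature. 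They can fail, or are simply unknown, when the curvature is allowed to approach $0$: the paper itself cites Buyalo's theorem that for one of the Abresch--Schr\"oder examples the boundary inclusion is \emph{not} $\pi_1$-injective, and it records as an open problem (Question \ref{wfc}) whether $i_*$ can even be surjective; this is also why Corollary \ref{infind} is stated only for $\pi_1$-injective ends, as a \emph{consequence} of Theorem \ref{pi_1(M)} rather than an ingredient of it. The same defect infects your two later key steps: the one-endedness argument (cocompact action on a neutered space, Bowditch-boundary-type reasoning) presupposes the relatively hyperbolic structure coming from pinching, and the non-elementariness claim (limit set equal to $\S^3$, existence of hyperbolic elements/closed geodesics) uses visibility, which again follows from pinching but not from $-1<K<0$. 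So what you have is a plausible proof of the theorem under the strictly stronger hypothesis of pinched negative curvature; it does not touch the case the theorem is actually about.

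For contrast, the paper's proof avoids the structure of the ends entirely. By Bishop--Goldberg, negative curvature in dimension $4$ makes the Chern--Gauss--Bonnet integrand a positive multiple of the volume form, bounded above by the curvature bound, so $0<\int_M e(TM)<\infty$. Cheeger--Gromov's $L^2$-index theorem for bounded geometry gives $\int_M e(TM)=\chi^{(2)}(M)$, and since (by Gromov's finiteness theorem) $M$ is homotopy equivalent to a finite complex, $\chi^{(2)}(M)=\chi(M)=\chi(\pi_1 M)$; hence $\chi(\pi_1M)>0$. The punchline is then a purely $3$-manifold-theoretic fact (Theorem \ref{3vs4}): every infinite, torsion-free, finitely generated $3$-manifold group has $\chi\leq 0$, proved via the Scott core, prime decomposition, and a doubling argument. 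If you want to salvage your outline, you would need to replace your first paragraph by an argument valid for $-1<K<0$; as far as the paper indicates, no such peripheral structure theory exists, which is precisely why the Gauss--Bonnet/$L^2$ route is taken.
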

The fundamental group $\pi_1(M)$ is torsion free because $M$ is aspherical, so if $\pi_1(M)$ acts properly discontinuously on a $3$-manifold $N^3$, then the quotient $N/\pi_1(M)$ is a manifold. Theorem \ref{pi_1(M)} implies that $\pi_1(M)$ does not act properly discontinuously on any manifold of dimension less than $4$. Therefore an immediate corollary of the above theorem is that the action dimension of $M$ is equal to $4$. (Recall that the \emph{action dimension} of a group $\Gamma$ is the smallest dimension $d$ for which there is a contractible manifold $X^d$ on which $\Gamma$ acts properly discontinuously.) 

The proof of Theorem \ref{pi_1(M)} is particular for dimension four and does not use any technique that was developed for computing the action dimension. 
We combine known results in pre-Geometrization $3$-manifold theory, the generalized Gauss-Bonnet theorem, and the $L^2$-cohomology theory of Cheeger and Gromov. 

Any finitely generated group is the fundamental group of a $4$-manifold, so the above phenomenon (the fundamental group $\pi_1(M)$ is not the fundamental group of a lower dimensional manifold) does not occur in dimension greater than four. However, it is reasonable to ask whether $\pi_1(M)$ can be the fundamental group of an  aspherical manifold of dimension less than $\dim M$.
\begin{question}\label{ad}
Let $M^n$ be a complete, finite volume, Riemannian $n$-manifold of bounded nonpositive curvature $-1<K\leq 0$. Assume that the fundamental group $\pi_1(M^n)$ is finitely generated. Is the action dimension of $\pi_1(M^n)$ equal to $n$?
\end{question}
The answer to the above question is clearly yes for $n=2$. It is also yes for $n=3$. We will give a short proof of this in the appendix (Section \ref{dim 3}). For general $n$, the answer is yes for the locally symmetric manifolds by a result of Bestvina and Feighn (\cite{bestvinafeighn}), but not so much is known for nonpositively curved manifolds whose universal covers have less symmetry. We observe that if the curvature is pinched between two negative constants or, more generally, if the universal covering space $\widetilde{M}$ of $M$ is a \emph{visibility} manifold (as defined by Eberlein and O'Neill \cite{EO}), then the action dimension of $\pi_1(M^n)$ is equal to $n$. 
Recall that $\widetilde{M}$ is a \emph{visibility} manifold or is said to satisfy the visibility axiom if for any two distinct points $x,y$ in the visual boundary of $\widetilde{M}(\infty)$ there exists a geodesic  $c\colon\mathbb{R}\rightarrow \widetilde{M}$ such that $c(-\infty)=x$ and $c(+\infty)=y$. If $M$ has pinched negative curvature $-1<K<-\varepsilon<0$, then $\widetilde{M}$ is a visibility manifold. 
\begin{proposition}\label{adovm}
Let $M$ be a complete, finite volume, Riemannian $n$-manifold of bounded nonpositive curvature $-1<K\leq 0$. Suppose that the universal cover $\widetilde{M}$ is a visibility manifold. Then the action dimension of $\pi_1(M)$ is $\actdim(\pi_{1}(M))=n$. 
\end{proposition}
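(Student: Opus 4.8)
The plan is to prove the two inequalities $\actdim(\pi_1(M))\le n$ and $\actdim(\pi_1(M))\ge n$ separately, writing $\Gamma:=\pi_1(M)$. The upper bound is immediate: since $-1<K\le 0$, the Cartan--Hadamard theorem identifies $\widetilde M$ with a contractible $n$-manifold (diffeomorphic to $\R^n$), on which $\Gamma$ acts freely and properly discontinuously by deck transformations. Hence $\actdim(\Gamma)\le n$, and the content of the proposition is the reverse inequality.

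For the lower bound I would argue by contradiction: suppose $\Gamma$ acts properly discontinuously on a contractible manifold $X$ with $d:=\dim X\le n-1$. Since $\Gamma$ is torsion free and the action is properly discontinuous, point stabilizers are finite, hence trivial, so the action is free and $Y:=X/\Gamma$ is a $d$-manifold which is aspherical with $\pi_1(Y)=\Gamma$, i.e.\ a $K(\Gamma,1)$. The geometric input I would then invoke is the following consequence of the visibility hypothesis together with Gromov's compactification: $M$ is the interior of a compact aspherical $n$-manifold $\overline M$ whose boundary is a disjoint union of closed aspherical $(n-1)$-manifolds, and, provided $M$ is noncompact, at least one boundary component $N$ has $P:=\pi_1(N)$ injecting into $\Gamma$ with infinite index. (If $M$ is compact the argument is shorter: $\Gamma$ is then a Poincar\'e duality group of dimension $n$, so $H^n(\Gamma;\Z/2)\neq 0$, which is impossible for a $K(\Gamma,1)$ of dimension $d\le n-1$; so assume $M$ noncompact.) Fix such an $N$; as a closed $(n-1)$-manifold it has $H^{n-1}(N;\Z/2)=\Z/2$.

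Now I would restrict the action to $P$. Then $X/P$ is a $d$-manifold and a $K(P,1)$; since $N$ is also a $K(P,1)$ there is a homotopy equivalence $X/P\simeq N$, whence $H^{n-1}(X/P;\Z/2)\cong H^{n-1}(N;\Z/2)\neq 0$. On the other hand, a connected $d$-manifold with $d\le n-1$ has vanishing $(n-1)$-st $\Z/2$-cohomology \emph{unless} $d=n-1$ and the manifold is compact, because a connected noncompact manifold is homotopy equivalent to a CW complex of one lower dimension. Therefore $d=n-1$ and $X/P$ is compact. But $X/P\to Y$ is an intermediate covering of the regular covering $X\to Y$, with exactly $[\Gamma:P]$ sheets, and a covering whose total space is compact has finitely many sheets (the fiber is compact and discrete). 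Hence $[\Gamma:P]<\infty$, contradicting infinite index. This forces $d\ge n$, completing the lower bound.

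The purely topological steps (freeness of the action, the two $K(\pi,1)$ identifications, the cohomological dimension of noncompact manifolds, and the covering-space count) are routine. The step I expect to require the most care is the geometric input: using the visibility axiom, finite volume, and bounded nonpositive curvature to guarantee that each end of $M$ compactifies to a \emph{closed} aspherical cross-section $N$ and that the peripheral subgroup $P=\pi_1(N)$ sits inside $\Gamma$ with \emph{infinite} index. This is precisely where visibility (equivalently, pinched negative curvature) enters, via the structure of the thin part; it is exactly the property that can fail for general bounded nonpositively curved finite volume manifolds, and it is essential because $\Gamma$ itself has $\cd(\Gamma)=n-1$, so no obstruction can come from $\Gamma$ alone---the argument genuinely needs the peripheral $(n-1)$-dimensional duality subgroup of infinite index.
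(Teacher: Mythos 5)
Your overall route coincides with the paper's: the upper bound comes from the free, properly discontinuous deck action on $\widetilde M$; the lower bound is by contradiction, restricting a hypothetical properly discontinuous action on a contractible manifold $X^d$, $d\le n-1$, to the subgroup $P=\pi_1(C)$ of a closed aspherical cross-section $C$ of an end, using nonvanishing of the top-dimensional (co)homology of $C$ to force the intermediate cover $X/P$ to be a \emph{compact} $(n-1)$-manifold, and then deducing $[\pi_1(M):P]<\infty$ from compactness of a covering space --- contradicting the infinite-index property of peripheral subgroups. (The paper phrases the compactness step with $H_{n-1}$ in $\Z$ or $\Z_2$ coefficients rather than $H^{n-1}(\,\cdot\,;\Z/2)$; this difference is immaterial.)

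The one genuinely incomplete step is the one you yourself flag: the claim that $P$ has infinite index in $\pi_1(M)$. This is not a fact you can simply invoke; it is exactly the paper's Lemma~\ref{cii}, and it is where visibility and finite volume actually do their work. Moreover, the mechanism you gesture at --- ``the structure of the thin part'' --- is not how the argument goes, and it is not clear it can be made to work in this generality: visibility does not give pinched curvature, and Margulis/thin-part structure by itself does not rule out $\pi_1(M)$ being a finite extension of the purely parabolic cross-section group. The paper's argument is short but different: by a theorem of Ballmann, a finite-volume visibility manifold contains a closed geodesic, so $\pi_1(M)$ contains a hyperbolic isometry $\phi$; by Eberlein, $\pi_1(C)$ consists of parabolic isometries (nontrivial elements preserve the horospheres centered at a common fixed point at infinity); and a nonzero power of a hyperbolic isometry is again hyperbolic, never parabolic. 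Hence no power of $\phi$ lies in $\pi_1(C)$, which forces $[\pi_1(M):\pi_1(C)]=\infty$. With that lemma supplied, your proof closes up and is essentially the paper's.
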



\bigskip
\noindent
\textbf{Peripheral topology of $M$ when $n=4$.}
By a classical theorem of Gromov, $M$ is a the interior of a compact manifold $\overline{M}$ with boundary $\partial\overline{M}$. A general question is which subsets of $M$ can(not) be ``pushed to infinity", i.e. homotoped to be in a collar neighborhood of $\partial\overline{M}$. Another corollary of Theorem \ref{pi_1(M)} is the following. 
\begin{corollary}\label{4d core}
Let $M^4$ be a noncompact, complete, finite volume, Riemannian manifold with sectional curvature $-1 < K <0$. Then $M$ can not be homotoped to be contained in a collar neighborhood of $\partial\M$.
\end{corollary}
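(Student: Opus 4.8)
The plan is to derive this as a short consequence of Theorem~\ref{pi_1(M)}: I will show that if $M$ could be pushed into a collar of the boundary, then $\pi_1 M$ would embed into the fundamental group of a closed $3$-manifold and hence would itself be a $3$-manifold group, contradicting the theorem.

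First I would fix the topology. Let $U \cong \partial\M \times [0,1)$ denote a collar neighborhood of $\partial\M$ in $\M$, and recall the standard fact that the inclusion of the interior $j\colon M = \mathrm{int}(\M)\hookrightarrow \M$ is a homotopy equivalence; in particular $j_*\colon \pi_1 M \to \pi_1\M$ is an isomorphism. Suppose, for contradiction, that $M$ can be homotoped to lie inside $U$, i.e.\ that $j$ is homotopic, as a map $M\to\M$, to a map $g\colon M\to\M$ with $g(M)\subseteq U$.

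Next I would extract the group-theoretic consequence. Since $M$ is connected, $g(M)$ lies in a single connected component $U_0\cong C\times[0,1)$ of the collar, where $C$ is one component of the closed $3$-manifold $\partial\M$; as $U_0$ deformation retracts onto $C$ we have $\pi_1 U_0\cong \pi_1 C$. Factoring $g$ as the composite $M\xrightarrow{g'} U_0 \hookrightarrow \M$ with $k\colon U_0\hookrightarrow\M$ the inclusion, and using that $g$ is homotopic to $j$ (so that, after the usual change-of-basepoint correction along the track of the homotopy, $g_*=j_*$ is injective), I conclude that $g_*=k_*\circ g'_*$ is injective, and therefore $g'_*\colon \pi_1 M\to \pi_1 U_0\cong\pi_1 C$ is injective.

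Finally I would invoke covering space theory: the subgroup $g'_*(\pi_1 M)\le \pi_1 C$ is realized as $\pi_1(\widehat C)$ for the corresponding cover $\widehat C\to C$, and $\widehat C$ is again a $3$-manifold. Thus $\pi_1 M\cong \pi_1(\widehat C)$ would be a $3$-manifold group, contradicting Theorem~\ref{pi_1(M)}. The only points requiring care here are bookkeeping ones --- that the interior inclusion is a homotopy equivalence, that the connected image sits in a single collar component, and the basepoint correction making $g_*$ injective --- and I do not expect any genuine obstacle beyond these routine verifications, since the essential mathematical content is carried entirely by Theorem~\ref{pi_1(M)}, which I am assuming.
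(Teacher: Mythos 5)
Your proposal is correct and follows essentially the same route as the paper: both arguments convert the hypothetical homotopy into a $\pi_1$-injective map from $M$ to a single boundary component $C$, then realize $\pi_1 M$ as the fundamental group of the cover of $C$ corresponding to the image subgroup, contradicting Theorem~\ref{pi_1(M)}. The extra bookkeeping you supply (collar versus boundary, connectedness of the image, basepoint correction) is harmless and just makes explicit what the paper's shorter proof leaves implicit.
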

We have the following conjecture for general dimension $n$.
\begin{conjecture}\label{nonperipheral core}
Let $M$ be the interior of a compact manifold $\M$ with boundary $\partial\M$. Suppose that $M$ has a complete, finite volume Riemannian metric $g$ with sectional curvature $-1 \leq K(g) \leq 0$. Then $M$ can not be homotoped to be contained in a collar neighborhood of $\partial\M$.
\end{conjecture}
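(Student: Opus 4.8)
The plan is to deduce Corollary \ref{4d core} from Theorem \ref{pi_1(M)} by showing that if $M$ could be pushed into a collar of $\partial\M$, then $\pi_1 M$ would be realized as the fundamental group of a $3$-manifold. First I would record two standard homotopy facts. Since $M$ is the interior of the compact manifold $\M$, the collar neighborhood theorem gives that the inclusion $M\hookrightarrow\M$ is a homotopy equivalence, so $\pi_1 M\cong\pi_1\M$. Likewise, a collar neighborhood $C\cong\partial\M\times[0,1)$ deformation retracts onto the closed $3$-manifold $\partial\M$, so $\pi_1 C\cong\pi_1\partial\M$.

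Next I would set up the contradiction. Suppose $M$ can be homotoped to be contained in $C$; transporting this through the equivalence $M\simeq\M$, we obtain a map $g\colon\M\to\M$ that is homotopic to $\id_{\M}$ and satisfies $g(\M)\subseteq C$. Because $\M$ is connected, the image $g(\M)$ lies in a single connected component $C_0$ of $C$, and $C_0$ deformation retracts onto one boundary component $\Sigma=\partial_0\M$, a connected closed $3$-manifold. Writing $g=\iota\circ f$ with $f\colon\M\to C_0$ and $\iota\colon C_0\hookrightarrow\M$ the inclusion, the relation $\iota\circ f\simeq\id_{\M}$ yields $\iota_*\circ f_*=\id_{\pi_1\M}$ on fundamental groups. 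In particular $f_*\colon\pi_1\M\to\pi_1 C_0\cong\pi_1\Sigma$ is injective, so $\pi_1 M\cong\pi_1\M$ embeds as a subgroup $H:=f_*(\pi_1\M)$ of the $3$-manifold group $\pi_1\Sigma$.

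Finally I would realize $H$ geometrically. Let $\widehat\Sigma\to\Sigma$ be the covering space corresponding to the subgroup $H\le\pi_1\Sigma$. Then $\widehat\Sigma$ is again a $3$-manifold and $\pi_1\widehat\Sigma\cong H\cong\pi_1 M$. Thus $\pi_1 M$ is the fundamental group of a $3$-manifold, contradicting Theorem \ref{pi_1(M)}. This contradiction shows that $M$ cannot be homotoped into a collar neighborhood of $\partial\M$.

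The argument is short once Theorem \ref{pi_1(M)} is in hand, so I do not expect a genuine obstacle; the step I would treat most carefully is the bookkeeping. No basepoint hypothesis is needed, since the injectivity of $f_*$ comes directly from $\iota_*f_*=\id$; the connectedness of $\M$ is what forces $g(\M)$ into a single boundary component; and the covering space must be taken for exactly the subgroup $H$ so that $\pi_1\widehat\Sigma$ is precisely $H$. Note also that no compactness of $\widehat\Sigma$ is required, because Theorem \ref{pi_1(M)} rules out $\pi_1 M$ being the fundamental group of \emph{any} $3$-manifold.
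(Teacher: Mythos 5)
You have proved the wrong statement. The statement in question is Conjecture \ref{nonperipheral core}, which concerns manifolds of \emph{every} dimension with bounded \emph{nonpositive} curvature $-1 \leq K(g) \leq 0$; the paper states this as an open conjecture and does not prove it (it only records the cases of dimension two, dimension three via the appendix, and dimension four with $-1<K<0$). What you prove is exactly the last special case, Corollary \ref{4d core}. The restriction is forced by your reliance on Theorem \ref{pi_1(M)}, which is genuinely four-dimensional (its proof is the Chern--Gauss--Bonnet/$L^2$ argument giving $\chi(\pi_1 M)>0$) and genuinely needs strict negativity of the curvature (positivity of the Euler form fails once $K=0$ is allowed). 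The approach cannot be pushed to the conjecture's generality: in dimension three the intermediate claim ``$\pi_1 M$ is not a $3$-manifold group'' is simply false, yet the conjecture still asserts something there (the paper handles it instead via Lemma \ref{fgcnba}, the infinite-index statement for cusp cross-sections); and in dimension four with $K\leq 0$, an example such as $N^3\times S^1$ with $N^3$ a cusped finite-volume hyperbolic $3$-manifold is complete, finite volume, with $-1\leq K\leq 0$, but has vanishing Euler form and $\chi=0$, so no Euler-characteristic obstruction exists and Theorem \ref{pi_1(M)} does not apply. As a proof of the conjecture, then, there is a genuine gap --- indeed a complete proof would settle an open problem.

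Read instead as a proof of Corollary \ref{4d core}, your argument is correct and is essentially the paper's own proof: assume $M$ is homotoped into the collar, observe that the terminal map factors through a single boundary component $\Sigma$, deduce injectivity of $f_*\colon \pi_1 M\to\pi_1\Sigma$ from $\iota_*\circ f_*=\mathrm{id}$, pass to the cover of $\Sigma$ corresponding to $f_*(\pi_1 M)$ to realize $\pi_1 M$ as the fundamental group of a $3$-manifold, and contradict Theorem \ref{pi_1(M)}. Your write-up is more careful than the paper's about the bookkeeping (connectedness forcing a single boundary component, basepoints, and the observation that the cover need not be compact), all of which the paper compresses into two sentences; nothing in that part needs repair.
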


\begin{remark}
Conjecture \ref{nonperipheral core} obviously holds when $M$ has dimension two. It also holds when $M$ has dimension three. We will prove this in the appendix. If we do not require the curvature to be bounded, then both Question \ref{ad} and Conjecture \ref{nonperipheral core} are false in dimension three (\cite{Phan}).
\end{remark}

Let $i \colon \partial\overline{M} \longrightarrow \overline{M}$ be the inclusion map. Other natural questions to ask are whether the induced map  $i_*$ on the fundamental group is injective, or if $i_*$ can be surjective. In \cite{buyalo} Buyalo proved that for one of the examples constructed by Abresch and Schr\"oder (\cite{abreschschroeder}) the map $i$ is not $\pi_1$-injective. It is not known if there is an example where $i_*\colon\pi_1(\partial \M) \rightarrow\pi_1(\M)$ is surjective. 

\begin{question}\label{wfc}
Let $M$ be a noncompact, complete, finite volume, Riemannian manifold with sectional curvature $-1 < K <0$. And let $C$ be a component of $\partial \M$. Let $i\colon C \hookrightarrow M$ be the inclusion. Is $[\pi_1(M)\colon i_*(\pi_1(C))]>1$?
\end{question}

The answer to this question is yes if $M$ has pinched negative sectional curvature $-1< K<-a^2 <0$, or more generally, when $M$ has bounded nonpositive sectional curvature $-1 <K\leq 0$ and the universal cover $\widetilde{M}$ is a visibility manifold. The following corollary addresses Question \ref{wfc} in dimension four for a special case where we put a restriction on the ends of $M$.
\begin{corollary}\label{infind}
Let $M^4$ be a noncompact, complete, finite volume, Riemannian manifold with sectional curvature $-1 < K <0$. Then for each $\pi_1$-injective end $C^{3}\times \mathbb{R}^{\geq 0}$, we have
\[[\pi_1(M)\colon\pi_1(C)]=\infty.\]
\end{corollary}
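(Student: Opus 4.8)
The plan is to deduce the corollary from Theorem~\ref{pi_1(M)} by a covering-space argument, reasoning by contradiction. Let $i\colon C \hookrightarrow M$ be the inclusion of the given $\pi_1$-injective cross-section, and write $\Gamma := i_*(\pi_1(C)) \leq \pi_1(M)$. By $\pi_1$-injectivity the homomorphism $i_*$ is an isomorphism onto $\Gamma$, so $\Gamma \cong \pi_1(C)$. Suppose, for contradiction, that $d := [\pi_1(M) : \Gamma] < \infty$.

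First I would pass to the connected covering space $p\colon \hat{M} \to M$ corresponding to the finite-index subgroup $\Gamma \leq \pi_1(M)$, so that $\pi_1(\hat{M}) = \Gamma \cong \pi_1(C)$ and $p$ is a $d$-sheeted cover. Equipping $\hat{M}$ with the pulled-back metric, the cover $\hat{M}$ is again a complete Riemannian $4$-manifold with $-1 < K < 0$, and, the covering being finite, $\Vol(\hat{M}) = d\cdot\Vol(M) < \infty$. Thus $\hat{M}$ satisfies all the hypotheses of Theorem~\ref{pi_1(M)}.

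Next I would note that $C$ is a $3$-manifold, so $\pi_1(C)$, and hence the isomorphic group $\pi_1(\hat{M}) = \Gamma$, is the fundamental group of a $3$-manifold. This contradicts Theorem~\ref{pi_1(M)} applied to $\hat{M}$, which asserts that $\pi_1(\hat{M})$ is not a $3$-manifold group. Therefore the index $[\pi_1(M) : \pi_1(C)]$ must be infinite.

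The real content of this corollary is Theorem~\ref{pi_1(M)} itself; the deduction is short precisely because the class of finite-volume Riemannian $4$-manifolds with $-1 < K < 0$ is closed under passage to finite covers. The only place where the $\pi_1$-injectivity hypothesis is genuinely used is the identification $\pi_1(\hat{M}) \cong \pi_1(C)$: without injectivity the covering group $\Gamma = i_*(\pi_1(C))$ would be a proper quotient of $\pi_1(C)$, which need not be a $3$-manifold group, so the contradiction would evaporate. No serious obstacle arises, but I would take care to confirm that the pullback metric preserves both the curvature bounds $-1 < K < 0$ and the finiteness of the total volume, both of which are immediate for a finite cover.
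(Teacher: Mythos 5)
Your proof is correct and follows essentially the same route as the paper: pass to the finite cover $\widehat{M} = \widetilde{M}/i_*(\pi_1(C))$, observe that it inherits completeness, finite volume, and the curvature bounds $-1 < K < 0$, and then apply Theorem~\ref{pi_1(M)} to contradict $\pi_1(\widehat{M}) \cong \pi_1(C)$ being a $3$-manifold group. Your added remarks on where $\pi_1$-injectivity is used and why finite covers preserve the hypotheses are accurate, if slightly more detailed than the paper's version.
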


If we ignore base points, the question on surjectivity of $i_*$ can be replaced by asking whether any loop in $M$ can be homotoped to be in $\partial \M$. It is a conjecture of Farb (\cite{Phan})  that this cannot always be done even when we replace the curvature condition $-1<K<0$ of $M$ by the conditions $-1<K\leq 0$, and $M$ has \emph{geometric rank} equal to one if we assume $M$ is the interior of a compact manifold with boundary. Recall that a nonpositively curved manifold has \emph{geometric rank one} if there is a geodesic in its universal cover that does not bound an infinitesimal half flat.

This paper is organized as follows. In Section \ref{Euler} we recall some facts from pre-Geometrization $3$-manifold theory and prove that the Euler characteristic of an infinite, torsion free, finitely generated $3$-manifold group is nonpositive. In Section \ref{proof of main theorem} we prove Theorem \ref{pi_1(M)}.  In Section \ref{action dimension of visible manifolds} we prove Proposition \ref{adovm}. In Section \ref{peripheral topology}, we prove Corollary \ref{4d core} and Corollary \ref{infind}. The appendix discusses the above questions for dimension three. 

\begin{acknowledgement}
We would like to thank the Cornell 50th Topology Festival for hosting us in 2012 when this work started. The first two authors would like to thank the Math Department of Rice University for sponsoring a visit during which this work was finished. The first two authors would also like to thank Southwest airlines for keeping them bored and happy during an extended flight delay. This led to useful ideas that later made their way into the paper.
\end{acknowledgement}

\section{Euler characteristics of finitely generated $3$-manifold groups}\label{Euler}
In this section we recall some pre-Geometrization facts about $3$-manifolds and put them together to conclude that finitely generated $3$-manifold groups have non-positive Euler characteristics. 

A connected $3$-manifold $W$ is {\it prime} if it cannot be expressed as a nontrivial connect sum.
It is called {\it irreducible} if every embedded $2$-sphere bounds a $3$-ball. The only prime $3$-manifold that is not irreducible is $\mathbb{S}^1\times \mathbb{S}^2$ \cite[Proposition 1.4]{hatcher3manifolds}. Manifolds can be decomposed into prime components.

\bigskip
\noindent
\textbf{Prime Decomposition for $3$-manifolds}  \cite[1.5]{hatcher3manifolds}\label{pdt}
Let $W$ be a closed, orientable 3-manifold. Then there is a decomposition $W=N_1\#\dots\#N_i$ with the $N_i$ prime.

\bigskip
\noindent
\textbf{Sphere theorem}  (\cite[3.8]{hatcher3manifolds})
Let $W$ be a connected, orientable $3$-manifold. If $\pi_2W\not=0$ then there is an embedded $2$-sphere in $W$ representing a nontrivial element of $\pi_2W$.
A consequence is that irreducible manifolds with infinite fundamental groups are aspherical. 

If $\pi_1W$ is torsion free, we get the following amplification of the prime decomposition theorem. 
\begin{theorem}\label{mpdt}
Let $W$ be an orientable, closed, 3-manifold with torsion free fundamental group $\pi_1W$. Then, there is a closed, simply connected manifold $\Sigma$, a nonnegative integer $m$ and closed, oriented aspherical manifolds $\{N_{i}\}_{i=1}^n$ such that $W$ is homeomorphic to the connected sum 
\[\Sigma\#\underbrace{(\mathbb{S}^1\times \mathbb{S}^2)\#\cdots \#(\mathbb{S}^1\times \mathbb{S}^2)}_{m}\#N_{1}\#\cdots \#N_{n}.\] 
Consequently the fundamental group of $W$ decomposes as
\[\pi_{1}(W)\cong \underbrace{\mathbb{Z}*\cdots *\mathbb{Z}}_{m}*\pi_{1}(N_{1})*\cdots *\pi_{1}(N_{n}).\]  
\end{theorem}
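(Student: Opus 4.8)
The plan is to start from the prime decomposition $W = P_1 \# \cdots \# P_k$ supplied by the Prime Decomposition Theorem, and then to sort the prime summands $P_j$ into three types according to whether each is irreducible and according to the size of $\pi_1(P_j)$. Since a connected sum in dimension three realizes the free product on fundamental groups (van Kampen, the gluing $2$-spheres being simply connected), we have $\pi_1(W) \cong \pi_1(P_1) * \cdots * \pi_1(P_k)$, and each summand $W$ is orientable because $W$ is. The key algebraic input I would use is simply that each free factor $\pi_1(P_j)$ embeds as a subgroup of $\pi_1(W)$, and a subgroup of a torsion-free group is again torsion free; hence every $\pi_1(P_j)$ is torsion free.

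Next I would run the case analysis on the summands. First, by the fact recalled above the only prime $3$-manifold that fails to be irreducible is $\mathbb{S}^1\times\mathbb{S}^2$; these summands have $\pi_1 \cong \mathbb{Z}$ and account for the $m$ copies of $\mathbb{S}^1\times\mathbb{S}^2$. Among the irreducible summands, if $\pi_1(P_j)$ is infinite then the consequence of the Sphere Theorem noted above shows that $P_j$ is aspherical; after fixing orientations these are the $N_i$. Finally, if an irreducible summand $P_j$ has finite fundamental group, then since $\pi_1(P_j)$ is both finite and torsion free it must be trivial, so $P_j$ is simply connected.

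To finish I would gather all the simply connected prime summands together: their connected sum is a closed, simply connected $3$-manifold (again by van Kampen), which I call $\Sigma$. I would deliberately not attempt to identify $\Sigma$ with $\mathbb{S}^3$, since that is the Poincar\'e conjecture and is unavailable in the pre-Geometrization setting we are working in; this is precisely why the statement only asserts that $\Sigma$ is simply connected. Reassembling the summands and reading off fundamental groups, the simply connected factor $\Sigma$ contributes nothing, each $\mathbb{S}^1\times\mathbb{S}^2$ contributes a copy of $\mathbb{Z}$, and each $N_i$ contributes $\pi_1(N_i)$, which yields both the claimed homeomorphism and the free product decomposition of $\pi_1(W)$.

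The step I expect to require the most care is the propagation of the torsion-free hypothesis down to the individual prime factors and its combination with finiteness. It is exactly this that rules out summands with nontrivial finite fundamental group (such as lens spaces or other spherical space forms) and thereby forces every finite-$\pi_1$ piece to be a possibly fake homotopy sphere that can be absorbed into $\Sigma$; keeping track of this without over-claiming (i.e. without invoking Poincar\'e) is the delicate point, and it is what governs the precise form of the statement.
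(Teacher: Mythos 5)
Your proof is correct and takes essentially the same approach the paper intends: it assembles exactly the ingredients the paper recalls immediately before the statement (prime decomposition, the fact that $\mathbb{S}^1\times\mathbb{S}^2$ is the only non-irreducible prime, and the Sphere Theorem's consequence that irreducible manifolds with infinite fundamental group are aspherical), using torsion-freeness to force trivial $\pi_1$ on the remaining irreducible summands, which are then absorbed into $\Sigma$ without invoking Poincar\'e. The paper leaves this assembly implicit, and your write-up fills it in correctly.
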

\begin{remark}
The Geometrization theorem implies that $\Sigma$ is the $3$-sphere, but we do not use this. 
\end{remark}
To deal with nonclosed $3$-manifolds, we will also need Peter Scott's tameness theorem.   
\begin{theorem}[Scott core theorem, \cite{kap-hdg} 4.124]
If $W$ is a connected $3$-manifold with finitely generated fundamental group $\pi_1W$, then there is a compact connected $3$-dimensional submanifold-with-boundary $i:K\hookrightarrow W$ for which the inclusion $i$ defines an isomorphism of fundamental groups.  
\end{theorem}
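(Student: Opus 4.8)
This is a classical result of Scott, and for the purposes of this paper it is imported as a black box from Kapovich's book; nevertheless I sketch how one proves it. The strategy has two stages: first produce a compact submanifold $K$ whose inclusion induces a \emph{surjection} $\pi_1(K)\to\pi_1(W)$, and then modify $K$ by compressing its boundary until the inclusion is also \emph{injective}. By passing to the orientation double cover and then descending (or by arguing directly), one may reduce to the case that $W$ is orientable, so I assume this throughout.

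For the surjective stage, pick loops $\gamma_1,\dots,\gamma_k$ based at a point $p\in W$ whose classes generate the finitely generated group $\pi_1(W)$, and let $\Gamma\subset W$ be the image of a wedge of circles mapping onto these loops, put in general position. A closed regular neighborhood $K_0$ of $\Gamma$ is a compact submanifold (a handlebody) containing representatives of all the generators, so $K_0\hookrightarrow W$ induces a surjection on $\pi_1$; moreover, enlarging $K_0$ to any larger compact connected submanifold preserves surjectivity and gives room for the compressions below.

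For the injective stage, write $K$ for the current compact submanifold (initially $K_0$), set $N=\overline{W\setminus K}$, and try to make $\partial K$ incompressible on both sides. If some component $S$ of $\partial K$ is compressible in $N$, i.e. $\ker\bigl(\pi_1(S)\to\pi_1(N)\bigr)\ne 1$, then the Loop Theorem and Dehn's Lemma of Papakyriakopoulos (valid even though $N$ is noncompact, via the tower argument) produce an embedded disk $D\subset N$ whose boundary is an essential simple closed curve on $S$. Attaching a thickening of $D$ as a $2$-handle enlarges $K$ to some $K'$ and kills the class $[\partial D]$; since $[\partial D]$ already dies in $\pi_1(W)$, the surjection $\pi_1(K)\to\pi_1(W)$ factors through $\pi_1(K')$, so surjectivity is preserved. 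Symmetrically one compresses any $S$ that is compressible inside $K$, and one absorbs sphere boundary components that bound balls. Once no compression is possible, $\partial K$ is incompressible in $W$, so each edge map $\pi_1(S_i)\to\pi_1(K)$ and $\pi_1(S_i)\to\pi_1(N_j)$ of the graph-of-groups decomposition $W=K\cup_{\partial K}N$ is injective; by Bass--Serre theory (Seifert--van Kampen) the vertex group $\pi_1(K)$ then injects into $\pi_1(W)$. Together with surjectivity, $i_*$ is an isomorphism.

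The genuine difficulty is \emph{termination}: a priori the compressions interact, since compressing on one side can create compressibility on the other, and the disks provided by the Loop Theorem live in the noncompact complementary pieces, so there is no obvious monotone quantity. This is resolved by fixing a triangulation of $W$, putting all the compressing disks and boundary surfaces in normal form, and invoking Kneser--Haken finiteness to bound the number of disjoint essential normal surfaces; one checks that each compression strictly decreases a suitable complexity, so the process halts after finitely many steps. Controlling the noncompact ends inside this finiteness argument --- together with the parallel bookkeeping required in the non-orientable case and for sphere boundary components that do not bound balls --- is where essentially all the work lies.
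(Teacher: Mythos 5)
The paper does not prove this statement at all --- it is Scott's theorem, imported as a black box via Kapovich's book --- so your sketch can only be judged on its own terms, and as it stands the ``injective stage'' contains a fatal gap. Your surjectivity-preservation argument (the class $[\partial D]$ dies in $\pi_1(W)$, so the surjection factors through $\pi_1(K')$) is valid only for \emph{outward} compressions, where $K'\supset K$ is obtained by attaching a $2$-handle. For the ``symmetric'' \emph{inward} compressions it fails: cutting $K$ along a disk $D\subset K$ produces $K'$ with $\pi_1(K)\cong\pi_1(K')*\mathbb{Z}$ (non-separating case) or $\pi_1(K_1)*\pi_1(K_2)$ (separating case), so the piece you keep is a proper free factor and its image in $\pi_1(W)$ can shrink. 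Concretely, let $W$ be an open solid torus and $K$ a compact solid torus core: the meridian disk is an inward compression, and performing it replaces $K$ by a ball, destroying surjectivity onto $\pi_1(W)\cong\mathbb{Z}$, with no recovery step available. Worse, in this example the terminal state your algorithm aims for does not exist: a core whose boundary is incompressible on both sides could have only spherical boundary components (a closed surface group injecting into $\mathbb{Z}$ is trivial), and since $W$ is irreducible those spheres bound balls, so absorbing them would either place $K$ inside a ball (killing surjectivity) or exhibit a closed $3$-manifold embedded in the open connected manifold $W$, which is impossible. Two-sided incompressibility is indeed sufficient for your Bass--Serre argument, but it is generally unattainable by any core, so the procedure cannot both reach it and retain surjectivity.

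The outward-only half of your scheme does preserve surjectivity and does terminate (boundary complexity drops with each $2$-handle), but it is not enough for injectivity: a tubular neighborhood $K$ of a trefoil in $W=\mathbb{R}^3$ has boundary incompressible in $\overline{W\setminus K}$, yet $\pi_1(K)\cong\mathbb{Z}\to\pi_1(W)=1$ is far from injective. Separately, your termination argument invokes Kneser--Haken finiteness, which is a theorem about compact (finitely triangulated) $3$-manifolds; in a noncompact $W$ there is no bound on the number of disjoint, pairwise non-parallel essential normal surfaces, and the compressing disks supplied by the Loop Theorem may run arbitrarily far into the ends, so that step has no foundation either. These are not bookkeeping issues: Scott's actual proof takes a genuinely different route (resting on his theorem that finitely generated $3$-manifold groups are finitely presented, together with a more delicate surgery and covering-space argument), and the compression scheme you describe cannot be completed as stated.
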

The submanifold $K$ is called the {\it Scott core} of $W$.

\begin{theorem}\label{3vs4}
Suppose that $W$ is a $3$-manifold with infinite, torsion free, finitely generated fundamental group $\pi_1W$. Then $\chi(\pi_1W)\leq 0$. 
\end{theorem}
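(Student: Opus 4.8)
The plan is to reduce to the case of a closed orientable manifold and then read off the Euler characteristic from the free product decomposition provided by Theorem \ref{mpdt}. First I would dispose of orientability: if $W$ is nonorientable, pass to its orientation double cover $\widehat{W}$, whose fundamental group is an index-two subgroup of $\pi_1 W$, hence again infinite, torsion free and finitely generated. Since the Euler characteristic of a group is multiplicative in finite-index subgroups, $\chi(\pi_1\widehat{W}) = 2\,\chi(\pi_1 W)$, so it suffices to prove the inequality for $\widehat{W}$; thus I may assume $W$ is orientable. Next, because $\pi_1 W$ is finitely generated, the Scott core theorem lets me replace $W$ by a compact orientable $3$-manifold $K$ with $\pi_1 K \cong \pi_1 W =: G$.

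Second, I would put $K$ into a normal form to which Theorem \ref{mpdt} (or its bounded analogue) applies. Cap off every $\mathbb{S}^2$ boundary component of $K$ with a $3$-ball; by van Kampen this does not change $\pi_1$, and the resulting compact orientable manifold $K_1$ has boundary consisting only of surfaces of genus $\ge 1$ (or is closed). Applying the prime decomposition and the Sphere theorem, $G \cong \pi_1 K_1$ splits as a free product whose factors are copies of $\mathbb{Z}$ (one for each $\mathbb{S}^1\times\mathbb{S}^2$ summand) and the fundamental groups of the irreducible prime summands; the irreducible summands with infinite fundamental group are aspherical, those with finite fundamental group are simply connected (by torsion freeness) and drop out, and an irreducible summand with a sphere boundary component is a ball and also drops out. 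Hence $G \cong \mathbb{Z}^{*m} * \pi_1 N_1 * \cdots * \pi_1 N_l$ with each $N_j$ an aspherical $3$-manifold, closed or with boundary of genus $\ge 1$.

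Third, I would compute. Each factor has nonpositive Euler characteristic: $\chi(\mathbb{Z})=0$; for $N_j$ closed aspherical, $\chi(\pi_1 N_j)=\chi(N_j)=0$ since $N_j$ is a closed odd-dimensional manifold; and for $N_j$ aspherical with boundary, $\chi(\pi_1 N_j)=\chi(N_j)=\tfrac12\chi(\partial N_j)\le 0$ because every boundary component has genus $\ge 1$. Using the additivity $\chi(A_1*\cdots*A_r)=\sum_{i}\chi(A_i)-(r-1)$ for a free product of $r$ groups of type FP, and noting $r\ge 1$ since $G$ is infinite, I get $\chi(G)=\sum_i\chi(A_i)-(r-1)\le 0$, as desired.

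The hard part is the bounded boundary bookkeeping rather than the arithmetic. The identity $\chi(\pi_1 N_j)=\tfrac12\chi(\partial N_j)$ uses both that $N_j$ is aspherical (so the group and space Euler characteristics agree) and that $N_j$ has no sphere boundary (otherwise $\chi(\partial N_j)$ could be positive); this is exactly why the sphere-capping step and the observation that an irreducible summand with sphere boundary is a ball are needed. One subtlety is that the prime decomposition quoted in the excerpt is stated for closed manifolds. If one insists on using only that version, the bounded case can instead be reduced to the closed one by doubling: $DK_1 = K_1\cup_{\partial K_1}K_1$ is closed, orientable, with torsion free fundamental group (an amalgam of torsion free groups is torsion free), so the closed computation gives $\chi(\pi_1 DK_1)\le 0$, and the graph-of-groups additivity $\chi(\pi_1 DK_1)=2\chi(G)-\chi(\pi_1\partial K_1)$ together with $\chi(\pi_1\partial K_1)\le 0$ yields $\chi(G)\le 0$. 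The genuine obstacle in this variant is that the additivity formula requires $\partial K_1$ to be incompressible, which one must first arrange using the loop theorem.
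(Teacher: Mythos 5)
Your proof is correct, but in the non-closed case it takes a genuinely different route from the paper's. After the same reductions (orientation double cover, Scott core, capping sphere boundary components), you decompose the resulting compact manifold-with-boundary via prime decomposition plus the sphere theorem, and compute the group Euler characteristic factor-by-factor, using $\chi(\pi_1 N_j)=\chi(N_j)=\tfrac{1}{2}\chi(\partial N_j)\leq 0$ for the aspherical bounded pieces. This works, but it leans on the version of Kneser--Milnor for compact orientable manifolds \emph{with boundary} and on asphericity of bounded irreducible pieces with infinite fundamental group -- both standard, but strictly more than the closed-case statement quoted in the paper. The paper never decomposes the bounded manifold at all: writing $K'$ for the capped core, it uses the Hopf exact sequence $\pi_2(K')\rightarrow H_2(K')\rightarrow H_2(\pi_1 K')\rightarrow 0$ to get $b_2(K')\geq b_2(\pi_1 W)$, hence $\chi(\pi_1 W)\leq \chi(K')$ (this inequality is exactly the correction for the possible failure of $K'$ to be aspherical), and then the purely space-level doubling identity $0=\chi(K'\cup_{\partial K'}K')=2\chi(K')-\chi(\partial K')$ to conclude $\chi(K')=\tfrac{1}{2}\chi(\partial K')\leq 0$, since $\partial K'$ has no spheres. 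Because the paper's doubling happens at the level of spaces rather than groups, no $\pi_1$-injectivity of the boundary is needed; this is precisely what sidesteps the incompressibility/loop-theorem complication you correctly flag in your fallback variant. In short: your route buys an explicit free-product normal form for $G$ with each factor of nonpositive Euler characteristic, at the price of a stronger decomposition theorem; the paper's route buys economy of tools (prime decomposition only in the closed case, plus Hopf's theorem and doubling), at the price of getting only the inequality $\chi(\pi_1 W)\leq\chi(K')$ rather than structural information.
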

\begin{proof}
Passing to the orientation double cover if necessary, we may assume that $W$ is orientable (doing this replaces the fundamental group by an index two subgroup, and thus scales the Euler characteristic by $2$). We begin by proving the theorem in the case when $W$ is closed. 
In this case we can decompose $\pi_1W$ as
\[\pi_{1}(W)\cong \underbrace{\mathbb{Z}*\cdots *\mathbb{Z}}_{m}*\pi_{1}(N_{1})*\cdots *\pi_{1}(N_{n}),\]  
where the $N_i$ are closed aspherical $3$-manifolds. 
The right hand side is the fundamental group of the aspherical complex $K(\pi_1W,1)=(\vee^m\mathbb{S}^1)\vee N_1\vee ...\vee N_n$ obtained
by taking the wedge of $m$ circles and the $n$ aspherical manifolds $N_i$. The Euler characteristic of this wedge is 

%
\[\chi (\pi_1(W))=\chi(K(\pi_1W,1)) = m \chi (\mathbb{S}^1) + \chi  (N_1) + ... + \chi (N_n) - (m+n-1) = 1-m-n.\]
Here, the last equality follows because the Euler characteristic of $S^1$ and of any closed $3$-manifold vanishes. Finally, $\pi_1W$ is infinite, so $m+n\geq 1$ which implies that $\chi(\pi_1W)\leq 0$.  

Now, we deal with the general case when $W$ may not be closed but $\pi_1W$ is finitely generated. Its Scott core is a compact $3$-manifold-with-boundary $(K,\partial K)$ with fundamental group $\pi_1W$. 
The boundary $\partial K$ is a disjoint union of closed orientable surfaces. Label the spheres in the boundary by $S^2_1,\dots, S^2_n$. Fill in these spheres by gluing in $3$-balls to get a new $3$-manifold
\begin{equation}
K':=K\cup D_1^3\cup\cdots\cup D_n^3
\end{equation}
with the same fundamental group $\pi_1K'=\pi_1W$. If $K'$ is closed then the previous paragraph shows $\chi(\pi_1W)\leq 0$ and we are done.  
So, suppose that $K'$ is not closed. Then $b_3(K')=0$. Moreover $b_0(K')=b_0(\pi_1W)=1$ and $b_1(K')=b_1(\pi_1W)$ since $K'$ and $W$ have the same fundamental group. Further, the Hopf exact sequence \cite{Hopf42}
$$
\pi_2(K')\ra H_2(K')\ra H_2(\pi_1K')\ra 0
$$
implies $H_2(K')\ra H_2(\pi_1K')=H_2(\pi_1W)$ is onto so that $b_2(K')\geq b_2(\pi_1W)$.
Putting these results together, we get
\begin{eqnarray}
\chi(K')&=&-0+b_2(K')-b_1(K')+1,\\
&\geq &-b_3(\pi_1W)+b_2(\pi_1W)-b_1(\pi_1W)+1,\\
&=&\chi(\pi_1W).
\end{eqnarray} 
Let $D:=K'\cup_{\partial K'}K'$ be the double of $K'$ along its boundary. The double is a closed $3$-manifold, so its Euler characteristic vanishes and we get
\begin{equation}
0=\chi(D)=2\chi(K')-\chi(\partial K'),
\end{equation}
which implies
\begin{equation}
\chi(\pi_1W)\leq\chi(K')={1\over 2}\chi(\partial K')\leq 0,
\end{equation}
where the last inequality follows because the boundary $\partial K'$ has no $2$-spheres. 
\end{proof}


\section{Negatively curved $4$-manifold groups are not $3$-manifold groups}\label{proof of main theorem}

The proof of Theorem \ref{pi_1(M)} is an amalgamation of several well known results together with the theorem from the previous section. Let $M$ be a $4$-manifold with a complete, finite volume, Riemannian metric of bounded negative curvature
($-1<K<0$). The Euler class of the
tangent bundle of $M$ has an explicit description as a differential form $e(TM)$ built out of the Riemann curvature tensor. That description leads to the following theorem.
\begin{theorem}[\cite{bishopgoldberg} 4.1]
Let $M$ be a complete Riemannian $4$-manifold of negative curvature. Then the Euler form is a positive multiple of the volume form, i.e. $e(TM)=\phi\cdot d\vol_M$ for some positive function $\phi:M\ra\mathbb R^+$. Moreover, if the sectional curvatures of $M$ are uniformly bounded ($-1<K<0$) then the function $\phi$ is also bounded.   
\end{theorem}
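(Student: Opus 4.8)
The plan is to compute the Euler form pointwise as the Pfaffian of the curvature and then to estimate the resulting curvature polynomial. Fix $p\in M$ and an oriented orthonormal frame $e_1,\dots,e_4$ of $T_pM$, with dual coframe $e^1,\dots,e^4$. Up to a positive universal constant the Euler form is the Pfaffian of the curvature $2$-form $\Omega=(\Omega_{ij})$, i.e. a multiple of $\Omega_{12}\wedge\Omega_{34}-\Omega_{13}\wedge\Omega_{24}+\Omega_{14}\wedge\Omega_{23}$. Substituting $\Omega_{ij}=\sum_{a<b}R_{ijab}\,e^a\wedge e^b$ and collecting the coefficient of $d\vol_M=e^1\wedge e^2\wedge e^3\wedge e^4$ exhibits $\phi$ as a fixed, frame-independent polynomial, homogeneous of degree two, in the components $R_{ijkl}$ of the curvature tensor. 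It then remains to prove two things: that this polynomial is everywhere positive, and that it is bounded once $-1<K<0$.

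Boundedness is the routine half. The curvature tensor is determined by its sectional curvatures, and in an orthonormal frame each component $R_{ijkl}$ is a universal linear combination of sectional curvatures of finitely many $2$-planes built from $e_1,\dots,e_4$; hence a two-sided bound on $K$ gives a uniform bound on every $R_{ijkl}$. As $\phi$ is a fixed polynomial in these components, it is bounded on $M$ whenever $-1<K<0$.

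Positivity is the substance of the statement. Expanding the Pfaffian, $\phi$ is a sum of three kinds of terms: a diagonal part $K_{12}K_{34}+K_{13}K_{24}+K_{14}K_{23}$ with $K_{ij}:=R_{ijij}$; three squares $R_{1234}^2+R_{1324}^2+R_{1423}^2$ of the ``generic'' components; and twelve mixed cross terms, each a signed product of two off-diagonal components such as $-R_{1213}R_{3424}$. Negativity of all sectional curvatures makes each product $K_{ij}K_{kl}$ of complementary coordinate planes strictly positive and leaves the three squares nonnegative, so everything reduces to dominating the indefinite cross terms. The basic pointwise inequalities come from feeding rotated planes into $K<0$: applied to $\mathrm{span}(e_i,\cos\theta\,e_j+\sin\theta\,e_k)$ for all $\theta$, it forces $\left(\begin{smallmatrix}R_{ijij}&R_{ijik}\\ R_{ijik}&R_{ikik}\end{smallmatrix}\right)$ to be negative definite, whence $R_{ijik}^2<K_{ij}K_{ik}$; the first Bianchi identity $R_{1234}-R_{1324}+R_{1423}=0$ ties the three generic components together. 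Each cross term then factors through a product of two of the positive diagonal quantities—for instance $(R_{1213}R_{3424})^2<(K_{12}K_{34})(K_{13}K_{24})$—so AM--GM relates it to the diagonal budget.

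The real obstacle is that bounding each cross term separately in absolute value over-counts the diagonal budget, so one cannot simply apply AM--GM term by term; the twelve cross terms must instead be combined \emph{with their signs}, together with the three generic squares, into a single manifestly nonnegative expression. Carrying this out is the technical heart of the argument, and it is eased by first normalizing the frame to simplify some off-diagonal components and then completing squares using the Bianchi relation. It is essential here that \emph{every} sectional curvature is negative, not merely those of the coordinate planes; indeed, rewriting the Euler integrand through the pointwise invariants $|W|^2$, $|\mathrm{Ric}_0|^2$, $s^2$ (Weyl, trace-free Ricci, scalar curvatures) exposes a term of unfavorable sign in $|\mathrm{Ric}_0|^2$, showing that no argument using those invariants alone can succeed.
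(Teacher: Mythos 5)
Your reduction is set up correctly, and it is worth noting at the outset that the paper itself offers no proof of this statement --- it is quoted verbatim from Bishop--Goldberg, so your attempt has to be judged as a standalone reconstruction of that cited theorem (equivalently, of the Chern--Milnor theorem in the negative-definite case). The correct ingredients are all present: the identification of $e(TM)$ with a universal positive multiple of the Pfaffian $\Omega_{12}\wedge\Omega_{34}-\Omega_{13}\wedge\Omega_{24}+\Omega_{14}\wedge\Omega_{23}$; the boundedness half via polarization (each $R_{ijkl}$ is a universal linear combination of sectional curvatures, hence uniformly bounded when $-1<K<0$, hence so is the fixed polynomial $\phi$); the structural decomposition of $\phi$ into the diagonal part $K_{12}K_{34}+K_{13}K_{24}+K_{14}K_{23}$, the three squares $R_{1234}^2+R_{1324}^2+R_{1423}^2$, and twelve signed cross terms; the determinant inequality $R_{ijik}^2<K_{ij}K_{ik}$ from negative definiteness on rotated planes; and the Bianchi relation. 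Your closing remark about the Weyl/trace-free-Ricci/scalar decomposition is also correct and explains why the argument must use sectional curvatures directly.

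The genuine gap is that positivity of $\phi$ --- which is the entire content of the theorem --- is never actually proved. You correctly observe that term-by-term AM--GM over-spends the diagonal budget, and you then assert that the twelve cross terms ``must be combined with their signs\dots into a single manifestly nonnegative expression,'' calling this ``the technical heart of the argument''; but no such expression is exhibited, and no frame normalization is carried out. This is not a routine completion that can be waved at. Concretely: choosing the frame so that $\mathrm{span}(e_1,e_2)$ realizes the minimum of sectional curvature kills $R_{1213}=R_{1214}=R_{1223}=R_{1224}=0$ by first-order criticality on the Grassmannian, and with them eight of the twelve cross terms; the survivors are (up to sign) two copies each of $R_{1314}R_{2324}$ and $R_{1323}R_{1424}$. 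Your determinant inequalities plus AM--GM then bound the surviving cross terms only by $4\sqrt{K_{13}K_{14}K_{23}K_{24}}$, whereas the relevant diagonal budget satisfies merely $K_{13}K_{24}+K_{14}K_{23}\geq 2\sqrt{K_{13}K_{14}K_{23}K_{24}}$ --- short by a factor of two --- and $K_{12}K_{34}$ cannot absorb the deficit without further information (nothing so far prevents $|K_{34}|$ from being much smaller than the other coordinate curvatures). Closing this requires second-order (Berger-type) inequalities at the extremal plane, or Milnor's sign-grouping of the full permutation sum --- i.e., precisely the argument that occupies Bishop and Goldberg's paper. As written, your proposal is a correct reduction together with an accurate description of the obstacle, not a proof.
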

Integrating, one gets the following consequence. 
\begin{corollary}
If $M$ is a complete, finite volume, Riemannian $4$-manifold of bounded negative curvature $(-1<K<0)$ then 
\begin{equation}
\infty>\int_Me(TM)>0.
\end{equation}
\end{corollary}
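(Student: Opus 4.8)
The plan is to simply integrate the pointwise identity $e(TM)=\phi\cdot d\vol_M$ furnished by the preceding theorem, feeding its two conclusions — that $\phi$ is strictly positive and that $\phi$ is bounded — into the lower and upper bounds respectively, with the finite volume hypothesis supplying the finiteness. First I would note that since $\int_M e(TM)$ is an integral of a top-dimensional form over the noncompact manifold $M$, it should be interpreted as the integral of the density $\phi\cdot d\vol_M$, computed as the limit over an exhaustion $K_1\subset K_2\subset\cdots$ of $M$ by compact sets. Because $\phi\geq 0$, the quantities $\int_{K_j}\phi\, d\vol_M$ form a nondecreasing sequence, so the limit exists in $[0,\infty]$ and no orientability assumption on $M$ is needed.

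For the lower bound, the theorem gives $\phi(x)>0$ for every $x\in M$, and $M$ is a $4$-manifold, so it has positive volume $\vol(M)>0$; hence already on some compact set of positive measure the integral of the strictly positive density $\phi\, d\vol_M$ is positive, giving $\int_M e(TM)>0$. For the upper bound I would invoke the boundedness clause: the curvature pinching $-1<K<0$ forces $\phi\leq C$ for a constant $C$, so that $\int_{K_j}\phi\, d\vol_M\leq C\,\vol(K_j)\leq C\,\vol(M)$ for every $j$. Since $M$ has finite volume, the increasing sequence is bounded above by $C\,\vol(M)<\infty$ and therefore converges to a finite value. Combining the two estimates yields $\infty>\int_M e(TM)>0$.

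I do not expect any genuine obstacle here: the entire content is carried by the quoted theorem, and the argument is just a monotone-convergence bookkeeping over an exhaustion. The one point worth flagging is the division of labor between the two volume properties — the lower bound rests on $\vol(M)>0$, which is automatic, while the finiteness rests on $\vol(M)<\infty$, which is precisely the finite volume hypothesis — so that both the positivity and the finiteness of $\int_M e(TM)$ are inherited directly from the corresponding properties of the volume of $M$.
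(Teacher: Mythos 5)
Your proposal is correct and follows exactly the paper's route: the paper simply says ``Integrating, one gets the following consequence'' of the Bishop--Goldberg theorem, which is precisely your argument of using $\phi>0$ with $\vol(M)>0$ for positivity and the bound $\phi\leq C$ with $\vol(M)<\infty$ for finiteness. Your extra care about the exhaustion/monotone-convergence interpretation of the integral and about densities versus forms is a harmless elaboration of the same proof.
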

For closed manifolds, the Chern-Gauss-Bonnet theorem implies that the integral of the Euler class is equal to the Euler characteristic $\chi(M)$.
The $L^2$-cohomology theory developed by Cheeger and Gromov extends this result to a class of noncompact manifolds satisfying the following
\emph{bounded geometry} condition.
\begin{defn}
A Riemannian manifold $N$ has \emph{bounded geometry} if its volume is finite, its sectional curvatures are uniformly bounded ($|K|\leq 1$) and the injectivity radius of the universal cover $\widetilde N$ is bounded below by a positive constant ($\mathrm{inj Rad}_{\widetilde N}>\varepsilon>0$).  
\end{defn}
Cheeger and Gromov proved that for manifolds satisfying their bounded geometry condition, the $L^2$-Euler characteristic $\chi^{(2)}(M)$ is finite
and can be computed as the integral of the Euler form. 
\begin{theorem}[Cheeger-Gromov, \cite{cheegergromov} 0.14]
If the Riemannian manifold $N$ has bounded geometry then 
\begin{equation}
\int_Ne(TN)=\chi^{(2)}(N).
\end{equation}
\end{theorem}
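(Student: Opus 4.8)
This is a theorem of Cheeger and Gromov, so the natural plan is to follow their $L^2$-index-theoretic strategy and compute the $L^2$-Euler characteristic in two different ways via the heat kernel. Write $\Gamma=\pi_1(N)$, let $\widetilde N$ be the universal cover with its free $\Gamma$-action, and let $\mathcal F\subset\widetilde N$ be a fundamental domain. Recall that $\chi^{(2)}(N)=\sum_p(-1)^p b_p^{(2)}(N)$, where the $L^2$-Betti numbers $b_p^{(2)}(N)=\mathrm{tr}_\Gamma(P_p)$ are the $\Gamma$-dimensions of the spaces of $L^2$-harmonic $p$-forms on $\widetilde N$, and $\mathrm{tr}_\Gamma(A)=\int_{\mathcal F}\mathrm{tr}\,k_A(x,x)\,dx$ is the von Neumann trace of a $\Gamma$-equivariant operator $A$ with Schwartz kernel $k_A$. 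The goal is to relate this invariant to the pointwise Euler form $e(TN)$.

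First I would establish an $L^2$ version of the McKean--Singer identity: for every $t>0$,
\[\sum_p(-1)^p\,\mathrm{tr}_\Gamma\!\left(e^{-t\Delta_p}\right)=\chi^{(2)}(N),\]
independent of $t$. This rests on the Hodge--de Rham decomposition on $\widetilde N$ and the supersymmetric cancellation of the nonzero spectrum under the alternating sum, once one knows that each $e^{-t\Delta_p}$ is $\Gamma$-trace class. Here the bounded geometry hypothesis is essential: uniform curvature bounds together with a positive lower injectivity-radius bound on $\widetilde N$ yield uniform Gaussian heat-kernel estimates, which make the $\Gamma$-trace finite even though $\mathcal F$ is noncompact.

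Next I would take the two extreme limits. As $t\rightarrow\infty$, spectral theory for the $\Gamma$-action gives $\mathrm{tr}_\Gamma(e^{-t\Delta_p})\rightarrow\mathrm{tr}_\Gamma(P_p)=b_p^{(2)}(N)$, so the alternating sum converges to $\chi^{(2)}(N)$; the delicate point is controlling the continuous spectrum accumulating near $0$. As $t\rightarrow 0$, the local Gauss--Bonnet--Chern asymptotics (Patodi--Gilkey cancellation) show that the pointwise supertrace $\sum_p(-1)^p\mathrm{tr}\,k_{e^{-t\Delta_p}}(x,x)$ converges to the Euler form density $e(TN)(x)$, so that after integrating over $\mathcal F$ one obtains $\int_N e(TN)$. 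Since the alternating $\Gamma$-trace is independent of $t$, equating the two limits yields $\int_N e(TN)=\chi^{(2)}(N)$.

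The main obstacle, and the real content of Cheeger and Gromov's work, is justifying these manipulations in the noncompact, finite-volume setting: one must show that the heat operators are genuinely $\Gamma$-trace class, that the small-time asymptotic expansion holds \emph{uniformly} enough that the limit may be passed inside the integral over the noncompact fundamental domain, and that neither the continuous spectrum nor the geometry of the ends obstructs the large-time identification with the $L^2$-Betti numbers. Bounded geometry supplies exactly the uniform analytic estimates needed, while finite volume guarantees that the bounded Euler form is integrable; taming the behavior at infinity is precisely where all of these hypotheses are brought to bear.
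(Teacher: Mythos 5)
The paper does not prove this statement at all: it is quoted as Theorem 0.14 of Cheeger--Gromov \cite{cheegergromov} and used as a black box in the proof of Theorem \ref{pi_1(M)}. So there is no internal proof to compare yours against; your proposal has to stand on its own as a proof of the Cheeger--Gromov theorem, and judged that way it has a genuine gap --- one you effectively concede in your last paragraph, where every analytically substantive step is named rather than carried out.

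Concretely, three of your steps are precisely the content of the theorem and are asserted, not proved. First, the claim that each $e^{-t\Delta_p}$ is $\Gamma$-trace class needs uniform on-diagonal heat-kernel bounds over a noncompact, finite-volume fundamental domain; this is plausible under bounded geometry but is exactly the kind of estimate that must be established, not invoked. Second, the $t$-independence of the alternating $\Gamma$-trace (the $L^2$ McKean--Singer identity) is where the closed-manifold argument genuinely breaks on open quotients: the supersymmetric cancellation requires moving $d$ and $d^*$ past heat operators inside the $\Gamma$-trace, and on a noncompact quotient this involves justifying cyclicity of the trace for operators that are not obviously trace class, i.e.\ ruling out contributions from infinity. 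Third, interchanging the $t\rightarrow 0$ Patodi--Gilkey limit with integration over the noncompact fundamental domain requires the local asymptotics to hold uniformly, which again is a theorem to be proved under bounded geometry, not a formality. (By contrast, your $t\rightarrow\infty$ step is the easy one: once trace class is known, normality of the von Neumann trace gives $\mathrm{tr}_\Gamma(e^{-t\Delta_p})\downarrow \mathrm{tr}_\Gamma(P_p)$, continuous spectrum notwithstanding.) Saying that ``bounded geometry supplies exactly the uniform analytic estimates needed'' is a statement of faith in the result being proved. As written, your text is a well-informed outline of the strategy plus a citation of Cheeger--Gromov for the hard parts --- which is legitimate, but then it amounts to no more than what the paper already does by citing the theorem.
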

Gromov's finiteness theorem (\cite{gromovnegativelycurved} 1.8) implies that the manifold $M$ is the interior of a compact manifold-with-boundary $(\overline M,\partial\overline M).$
Moreover, it has bounded geometry (since the injectivity radius of the universal cover $\widetilde M$ is infinite). In this case, the Euler characteristic $\chi(M)$ and $L^2$-Euler characteristic $\chi^{(2)}(M)$ are well-defined and equal.
\begin{proposition}[Euler Poincare formula, \cite{luck} 1.36(2)]
If $X$ is a finite CW-complex then 
\begin{equation}
\chi(X)=\chi^{(2)}(X).
\end{equation}
\end{proposition}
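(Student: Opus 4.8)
The plan is to show that both $\chi(X)$ and $\chi^{(2)}(X)$ equal the alternating sum of the numbers of cells of $X$, exploiting the fact that the von Neumann dimension $\dim_{\mathcal N(G)}$, where $G=\pi_1(X)$, obeys the same additivity as ordinary dimension. First I would recall the combinatorial description of the ordinary Euler characteristic: if $c_i$ denotes the number of $i$-cells of the finite complex $X$, then
\[
\chi(X)=\sum_i(-1)^i c_i,
\]
since the alternating sum of the ranks of the cellular chain groups equals the alternating sum of the ordinary Betti numbers.

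For the $L^2$ side, I would pass to the universal cover $\widetilde X$ with its $G$-equivariant CW structure and consider the cellular $L^2$-chain complex
\[
\cdots\longrightarrow \ell^2(G)^{c_i}\stackrel{\partial_i}{\longrightarrow}\ell^2(G)^{c_{i-1}}\longrightarrow\cdots,
\]
a finite complex of finitely generated free Hilbert $\mathcal N(G)$-modules $C_i=\ell^2(G)^{c_i}$ with bounded $G$-equivariant differentials. By definition $\chi^{(2)}(X)=\sum_i(-1)^i b_i^{(2)}(X)$, where $b_i^{(2)}(X)=\dim_{\mathcal N(G)} H_i^{(2)}(X)$ and $H_i^{(2)}(X)=\ker\partial_i/\overline{\operatorname{im}\partial_{i+1}}$ is the reduced $L^2$-homology. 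The crucial inputs are the normalization $\dim_{\mathcal N(G)}\ell^2(G)^{c_i}=c_i$ and the rank--nullity identity
\[
\dim_{\mathcal N(G)}C_i=\dim_{\mathcal N(G)}\ker\partial_i+\dim_{\mathcal N(G)}\overline{\operatorname{im}\partial_i}.
\]
Writing $B_i=\overline{\operatorname{im}\partial_{i+1}}$, these give $\dim\ker\partial_i=\dim C_i-\dim B_{i-1}$ and $\dim H_i^{(2)}=\dim\ker\partial_i-\dim B_i$; substituting and telescoping the alternating sum cancels every $B_i$ term and leaves $\sum_i(-1)^i b_i^{(2)}=\sum_i(-1)^i\dim C_i=\sum_i(-1)^i c_i$. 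Comparing with the combinatorial formula for $\chi(X)$ then yields $\chi(X)=\chi^{(2)}(X)$.

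The main obstacle is justifying the additivity/rank--nullity property of $\dim_{\mathcal N(G)}$ for the bounded equivariant operators $\partial_i$: unlike in finite dimensions, the images need not be closed, which is precisely why one must pass to \emph{reduced} homology (quotienting by the closure $\overline{\operatorname{im}\partial_{i+1}}$) and verify that von Neumann dimension is additive on short exact sequences of finitely generated Hilbert $\mathcal N(G)$-modules. This is where the theory of $\dim_{\mathcal N(G)}$ does the real work — via the trace on $\mathcal N(G)$ and the polar decomposition identifying $C_i/\ker\partial_i$ with $\overline{\operatorname{im}\partial_i}$ up to a dimension-preserving weak isomorphism — while everything after that is the formal telescoping above. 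Since these are standard properties of von Neumann dimension and are the content of the cited reference, in the write-up I would invoke them rather than reprove them.
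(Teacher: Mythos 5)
Your argument is correct: it is the standard telescoping proof of the $L^2$ Euler--Poincar\'e formula, resting on the normalization $\dim_{\mathcal N(G)}\ell^2(G)^{c_i}=c_i$, additivity of von Neumann dimension (rank--nullity via polar decomposition), and the cancellation of the $\overline{\operatorname{im}\,\partial}$ terms in the alternating sum. The paper itself offers no proof --- it invokes this statement directly from L\"uck \cite{luck} 1.36(2) --- and your write-up is precisely the argument underlying that citation, so it matches the intended proof; your decision to quote rather than reprove the additivity properties of $\dim_{\mathcal N(G)}$ is appropriate, since that is where the genuine operator-theoretic content lies.
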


\subsection*{Proof of Theorem \ref{pi_1(M)}}
By Gromov's finiteness theorem $\pi_1M$ is the fundamental group of a finite complex, so 
$$
\chi(\pi_1M)=\chi(M)=\chi^{(2)}(M)=\int_Me(TM)>0.
$$
Since $\pi_1M$ is finitely generated, Theorem \ref{3vs4} implies $\pi_1M$ is not the fundamental group of a $3$-manifold. 
\begin{remark}
In \cite{McMullen00} C. McMullen showed that for a surface $S_{g,n}$ of genus $g$ and $n$ punctures, the Euler characteristic of the mapping class group $\mathbb{M}_{g,n}$ of $S_{g,n}$ is positive if $6g+2n-6=4k$ for some positive integer $k$, by  constructing a K\"ahler-hyperbolic Riemannian metric,  which has bounded geometry, on the moduli space $\mathbb{M}_{g,n}$ of surfaces $S_{g,n}$. By Theorem \ref{3vs4}, it follows that any finite index torsion-free subgroup of the mapping class group $Mod_{g,n}$ cannot be isomorphic to the fundamental group of a 3-manifold if $6g+2n-6=4k$ for some positive integer $k$.
\end{remark}

\section{Action dimension of manifolds with visibility universal cover}\label{action dimension of visible manifolds}
Before we prove Proposition \ref{adovm}, let us recall two facts on the structure of $M$. Firstly Theorem 3.1 in \cite{Eberlein80} tells us that $M$ has only finitely many ends and each end is homeomorphic to a codimension-one closed submanifold, called a cross-section, cross the real line. In particular the fundamental group of $M$ is finitely generated. Secondly Lemma 3.1g in \cite{Eberlein80} tells us that each cross section is a quotient space of a horosphere, which is aspherical. Moreover, the fundamental group of each cross section is a subgroup of the fundamental group of $M$ which consists of parabolic isometries and fixes a common point in the visual boundary of the universal covering space of $M$. One can refer to \cite{Eberlein80} for the notations and concepts. The following lemma will be applied.

\begin{lemma}\label{cii}
Let $C$ be any cross section of $M$. Then $[\pi_{1}(M):\pi_{1}(C)]=\infty$.
\end{lemma}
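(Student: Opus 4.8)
The plan is to argue by contradiction. Suppose $[\pi_1(M):\pi_1(C)]=k<\infty$ and write $\Gamma:=\pi_1(M)$. Recall from the structure results cited above that $\pi_1(C)$ fixes a point $\xi\in\widetilde M(\infty)$ and consists entirely of parabolic isometries. The first move is to replace $\pi_1(C)$ by its normal core $N:=\bigcap_{\gamma\in\Gamma}\gamma\,\pi_1(C)\,\gamma^{-1}$, the intersection of its finitely many conjugates. Then $N$ is normal in $\Gamma$, still of finite index, and $N\subseteq\pi_1(C)$, so every nontrivial element of $N$ is a parabolic isometry fixing $\xi$. Since the cross section $C$ is a closed aspherical manifold of positive dimension, $\pi_1(C)$ is infinite, hence so is its finite index subgroup $N$; fix a nontrivial $\eta\in N$.

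The key algebraic step uses that in a visibility manifold a nontrivial parabolic isometry fixes exactly one point of the boundary at infinity (Eberlein--O'Neill \cite{EO}). For any $\gamma\in\Gamma$ the conjugate $\gamma\eta\gamma^{-1}$ is again parabolic, with unique fixed point $\gamma\xi$. On the other hand $N$ is normal, so $\gamma\eta\gamma^{-1}\in N\subseteq\pi_1(C)$, and therefore $\gamma\eta\gamma^{-1}$ also fixes $\xi$. Uniqueness of the fixed point forces $\gamma\xi=\xi$. As $\gamma$ was arbitrary, the whole group $\Gamma$ fixes the single boundary point $\xi$.

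It remains to derive a contradiction from the fact that $\Gamma=\pi_1(M)$ fixes a point at infinity while $M$ has finite volume; this is the step I expect to be the main obstacle. The cleanest route is to invoke Eberlein's analysis of finite volume quotients of visibility manifolds \cite{Eberlein80}: finiteness of volume makes the geodesic flow nonwandering (Poincar\'e recurrence), which forces $\Gamma$ to be nonelementary, so that its limit set is all of $\widetilde M(\infty)$. A group fixing $\xi$ is elementary --- its limit set is contained in the at most two boundary points fixed by the stabilizer of $\xi$ --- and since $\dim\widetilde M(\infty)=n-1\geq 1$ the boundary sphere is infinite, a contradiction. Alternatively one can argue directly with volumes: if every element of $\Gamma$ preserved the horospheres centered at $\xi$, then, writing $b$ for the Busemann function of $\xi$ and combining the coarea formula with the convexity of horospheres in nonpositive curvature (so that $\Vol(\{b=s\}/\Gamma)$ is bounded below by a positive constant as $s\to+\infty$), one gets $\Vol(M)\geq\int_0^\infty \Vol(\{b=s\}/\Gamma)\,ds=\infty$. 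The subtle point, which requires care, is to rule out axial isometries fixing $\xi$, since these shift the horospheres and keep the region between two of them of finite volume; they are excluded because a discrete group cannot contain both a parabolic and an axial isometry sharing a fixed point, as conjugating the parabolic by powers of the axial produces infinitely many group elements accumulating at the identity, contradicting discreteness.
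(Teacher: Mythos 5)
Your argument takes a genuinely different route from the paper's, and its first two thirds are correct: passing to the normal core of $\pi_1(C)$, invoking the fact that a parabolic isometry of a visibility manifold fixes \emph{exactly} one point of $\widetilde M(\infty)$, and concluding that all of $\Gamma=\pi_1(M)$ fixes $\xi$ is a valid (and elegant) mechanism. The genuine gap is in the step you yourself flag as the main obstacle, and it afflicts both proposed finishes. In route (a), the justification ``a group fixing $\xi$ is elementary --- its limit set is contained in the at most two boundary points fixed by the stabilizer of $\xi$'' is not a valid inference: limit sets are not in general contained in fixed-point sets (a nonelementary Fuchsian group has full limit set and fixes no boundary point). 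The statement you actually need --- that a \emph{discrete} group all of whose elements fix $\xi$ has limit set with at most two points --- is true in the visibility setting, but proving it is precisely the parabolic-versus-axial case analysis plus discreteness that your proposal does not carry out; in particular, when every element of $\Gamma$ is parabolic you would still have to show the limit set is only $\{\xi\}$. In route (b), the coarea argument needs every element of $\Gamma$ to have vanishing Busemann cocycle at $\xi$, i.e.\ to preserve the horospheres. You exclude axial elements that shift horospheres, but not parabolic ones; in bounded nonpositive curvature it is not automatic that a parabolic fixing $\xi$ preserves the horospheres centered at $\xi$, and this is exactly the sort of degenerate behaviour that must be ruled out rather than assumed.

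The gap is closable with one ingredient you never invoke, which is in fact the engine of the paper's own, much shorter, proof: since $M$ is a finite volume visibility manifold it contains a closed geodesic (Theorem 2.13 of \cite{Ballmann82}), so $\Gamma$ contains an axial element $\phi$. In your setup $\phi$ fixes $\xi$ (hence, by the visibility classification of isometries, $\xi$ is an endpoint of its axis), the parabolic $\eta$ also fixes $\xi$, and your own discreteness lemma then finishes the proof --- with the caveat that what one actually gets is that the conjugates $\phi^{-k}\eta\phi^{k}$ are pairwise distinct and have \emph{uniformly bounded} displacement at a point on the axis (not convergence to the identity), which already contradicts proper discontinuity. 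The paper uses $\phi$ far more directly and needs no boundary dynamics at all: $\pi_1(C)$ consists of parabolic elements, so if $[\Gamma:\pi_1(C)]<\infty$ then by pigeonhole on cosets $\phi^{m}\in\pi_1(C)$ for some $m\neq 0$, and $\phi^{m}$ would be a parabolic power of an axial isometry, contradicting the CAT(0) fact that $\phi^{m}$ is parabolic if and only if $\phi$ is (\cite{BH}, Theorem 6.8). So your strategy is salvageable and yields the intermediate statement that $\Gamma$ fixes no point at infinity, but as written it replaces the paper's one-line algebraic contradiction with dynamical claims whose proofs are missing.
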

\begin{proof}
Since $M$ is visible and has finite volume, $M$ must has a nontrivial closed geodesic (see Theorem 2.13 in \cite{Ballmann82}) which represents a hyperbolic isometry of the universal covering space of $M^n$. We denote this hyperbolic isometry by $\phi$. Firstly Lemma 3.1g in \cite{Eberlein80} tells us that $\pi_{1}(C)$ consists of parabolic elements except the unit. Thus, $\phi \notin \pi_{1}(C)$.

Assume that the conclusion is wrong. Then there exists a non-zero integer $m \in \mathbb{Z}$ such that $\phi^{m} \in \pi_{1}(C)$. In particular $\phi^m$ is parabolic, which is impossible since a well-known fact in CAT(0) geometry says that $\phi^m$ is parabolic if and only if $\phi$ is parabolic (one can see Theorem 6.8 in \cite{BH}).  
\end{proof}
 
Now we are ready to prove Proposition \ref{adovm}.

\begin{proof}[Proof of Proposition \ref{adovm}]
If $M$ is closed, the conclusion is obvious. So we assume that $M$ is an open manifold. Firstly since the fundamental group $\pi_{1}(M)$ acts freely on the universal covering space of $M$, $\actdim(\pi_{1}(M))\leq n=dim(M)$. 

Assume that conclusion is not true. Then we have $\actdim(\pi_{1}(M)) \leq n-1$, which means that there exists a contractible manifold $N^{n-1}$ of dimension less than or equal to $n-1$ on which $\pi_{1}(M)$ acts properly discontinuously. Since $\pi_{1}(M)$ is torsion-free (because $M$ has nonpositive sectional curvature), the group $\pi_{1}(M)$ acts freely on $N^{n-1}$. In particular, the quotient space $N^{n-1}/\pi_{1}(M)$ is a manifold whose fundamental group is isomorphic to $\pi_{1}(M)$.

Let $C$ be a cross section in Lemma \ref{cii}. Since $\pi_{1}(C)$ is a subgroup of $\pi_{1}(M)$, we can find a covering space $\overline{N}$ of $N^{n-1}/\pi_{1}(M)$ such that $\pi_{1}(\overline{N})=\pi_{1}(C)$. Since $C$ is a closed aspherical manifold of dimension $n-1$, the top dimensional homology group $H_{n-1}(C)=H_{n-1}(\pi_{1}(C))$ does not vanish (we use $\mathbb{Z}_{2}$ as coefficients if $C$ is unoriented, otherwise we use $\mathbb{Z}$ as coefficients). Thus $\overline{N}$ is compact because $H_{n-1}(\overline{N})=H_{n-1}(\pi_{1}(\overline{N}))=H_{n-1}(\pi_{1}(C))$ which does not vanish. Since $\overline{N}$ is a covering space of $N^{n-1}/\pi_{1}(M)$ and compact, the covering must be a finite covering, which means that $[\pi_{1}(M):\pi_{1}(\overline{N})]<\infty$. That is
\[[\pi_{1}(M):\pi_{1}(C)]<\infty\]
which contradicts Lemma \ref{cii}.  
\end{proof}

\begin{remark}
Generally the action dimension of a group may be not preserved by subgroups of finite index (see \cite{bestvinafeighn}). However, the action dimension of $\pi_{1}(M)$ in Proposition \ref{adovm} is preserved by any subgroup $\Gamma$ of finite index because we can do the same argument for $\tilde{M}/ \Gamma$ where $\widetilde{M}$ is the universal covering space of $M$.
\end{remark}  

\begin{remark}
Even for complete finite volume manifolds with sectional curvatures $-1<K<0$ may not have visibility universal covers (one can see details in \cite{Phan, Wu12}).  
\end{remark}

\section{Peripheral topology}\label{peripheral topology}
Let $(M,\partial M)$ be a compact aspherical manifold-with-boundary. A subset $S\subset M$ is called {\it peripheral} if there is a homotopy $h_t:S\ra M$
with 
\begin{eqnarray*}
h_0&=&id_S,\\
h_1(S)&\subset&\partial M.
\end{eqnarray*} 
\begin{example}
The boundary $\partial M$ is obviously peripheral.   
If $M$ is a product $\partial M_0\times I$, or more generally if $M$ is the total space of a disk bundle with a non-vanishing section, then the entire $M$ is peripheral. On the other hand, we show below that a complete, finite volume Riemannian $4$-manifold of bounded negative curvature is not peripheral. 
\end{example}

\begin{proof}[Proof of Corollary \ref{4d core}]
Suppose that there is a homotopy $h_t\colon M \rightarrow M$ such that 
\begin{eqnarray*}
h_0&=&id_M,\\
h_1(M)&\subset&\partial \overline M.
\end{eqnarray*}
Let $C$ be the boundary component of $\partial\overline M$ that contains $h_1(M)$. Then the induced map ${h_1}_* \colon \pi_1(M) \rightarrow \pi_1(C)$ is injective. Therefore, $\pi_1(M)$ is isomorphic to the fundamental group of a $3$-manifold, namely the cover of $C$ that corresponds to ${h_1}_*(\pi_1(M))$. This contradicts Theorem \ref{pi_1(M)}. Hence such a homotopy $h_t$ does not exist, and $M$ is not peripheral.
\end{proof}


\begin{proof}[Proof of Corollary \ref{infind}]
Since $i_*$ is injective, $\pi_1(C)$ embeds as a subgroup of $\pi_1(M)$. Let $\widehat{M}:=\widetilde{M}/\pi_1(C)$ be the covering space of $M$  corresponding to $\pi_1(C)$.

Suppose that $[\pi_1(M)\colon\pi_1(C)]<\infty$. Then $\widehat{M}$ is a finite cover of $M$. It follows that $\widehat{M}$ is a  noncompact, complete, finite volume, Riemannian manifold with sectional curvature $-1<K<0$. By Theorem \ref{pi_1(M)}, $\pi_1(\widehat{M})$ is not isomorphic to the fundamental group of $3$-manifold. But $\pi_1(\widehat{M}) \cong \pi_1(C)$. Since $C$ is a $3$-manifold, this is a contradiction. So    $[\pi_1(M)\colon\pi_1(C)]=\infty$.
\end{proof}

\section{Appendix: some remarks on  dimension $3$}\label{dim 3}
In this section we discuss Question \ref{ad} and Question \ref{wfc} for the case where $M$ is a complete, open, finite volume, Riemannian $3$-manifold with sectional curvature $-1\leq K\leq 0$ and $\pi_1(M)$ is finitely generated. These include finite volume manifolds with $-1<K<0$.

\begin{remark} We have a few remarks.
\begin{itemize}
\item[a)] The manifold $M$ in Theorem \ref{adotm} is the interior of a compact manifold $\M$ with boundary $\partial\M$ by a theorem of Gromov and Schr\"oder (\cite[Corollary 2, p. 194]{ballmangromovschroeder}).
\item[b)] Let $C$ be a component of $\partial \M$. By \cite[Theorem 1.2]{ChGromov85}) the Euler characteristic of $C$ vanishes. Since $C$ is oriented and closed, it has to be the $2$-torus. 
\item[c)] Corollary 4.2 in \cite{BelePhan} implies $i_{*}:\pi_{1}(C)\to \pi_{1}(V)$ is injective. 
\end{itemize}
\end{remark}

\begin{lemma}\label{fgcnba}
Let $M$ be a $3$-dimensional orientable, complete, open, finite volume, Riemannian manifold with sectional curvature $-1\leq K \leq 0$. Suppose that $\pi_1(M)$ is finitely generated. Then $[\pi_{1}(M^3):\pi_{1}(C)]=\infty$. 
\end{lemma}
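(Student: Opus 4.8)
The plan is to argue by contradiction and to exploit the finite-volume hypothesis through the geometry of horospheres. By the remarks preceding the lemma, no passage to an orientation cover is needed since $M$ is already orientable, every boundary component $C$ is a $2$-torus with $\pi_1(C)\cong\mathbb{Z}^2$, and the inclusion $i_*\colon\pi_1(C)\to\pi_1(M)$ is injective. Suppose, toward a contradiction, that $[\pi_1(M):\pi_1(C)]<\infty$ and let $\widehat M:=\widetilde M/\pi_1(C)$ be the cover of $M$ corresponding to $\pi_1(C)$. Then $\widehat M$ is a finite cover of $M$, hence itself a complete, open, orientable, finite volume Riemannian $3$-manifold with $-1\le K\le 0$ and with $\pi_1(\widehat M)\cong\pi_1(C)\cong\mathbb{Z}^2$.

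Next I would pin down the geometry of the deck group $\pi_1(C)$. Since $C$ is a cusp cross section, $\pi_1(C)$ stabilizes a horoball in $\widetilde M$, so it fixes the center $\xi\in\widetilde M(\infty)$ of that horoball; moreover a nontrivial element stabilizing a horoball cannot be axial, since an axial isometry translates horospheres at an endpoint of its axis rather than preserving them. Hence $\pi_1(C)\setminus\{1\}$ consists of parabolic isometries fixing the common point $\xi$, and because $\pi_1(\widehat M)=\pi_1(C)$ the entire deck group of $\widehat M$ fixes $\xi$. Let $b\colon\widetilde M\to\mathbb{R}$ be the Busemann function centered at $\xi$, normalized so that $b\to-\infty$ toward $\xi$; its level sets are the horospheres $H_t=b^{-1}(t)$, they foliate $\widetilde M$, and they are permuted by $\pi_1(C)$. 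Thus $\widehat M$ is foliated by the closed tori $C_t:=H_t/\pi_1(C)$, $t\in\mathbb{R}$.

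The heart of the argument is a volume estimate via the coarea formula. Since $|\nabla b|\equiv 1$ and the gradient flow lines are unit-speed geodesics orthogonal to the horospheres, one has
\[\vol(\widehat M)=\int_{\mathbb{R}}\operatorname{area}(C_t)\,dt.\]
I would then invoke the standard fact that in nonpositive curvature the area of horosphere quotients centered at $\xi$ is nondecreasing as one moves away from $\xi$ (equivalently, the shape operator of a horosphere with respect to the normal pointing away from $\xi$ is positive semidefinite), so $\operatorname{area}(C_t)$ is nondecreasing in $t$. Fixing any $t_0$, we get $\operatorname{area}(C_t)\ge\operatorname{area}(C_{t_0})>0$ for all $t\ge t_0$, whence the integral diverges and $\vol(\widehat M)=\infty$. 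This contradicts the finiteness of $\vol(\widehat M)$, so $[\pi_1(M):\pi_1(C)]<\infty$ is untenable and the index is infinite.

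The step I expect to be the main obstacle is the monotonicity of $\operatorname{area}(C_t)$ in the merely nonpositively (not strictly negatively) curved setting, where horospheres need not expand strictly and flat directions may appear. The point to emphasize is that nondecrease, not strict increase, already suffices: as soon as $\operatorname{area}(C_t)$ is bounded below by a positive constant on a half-line, the coarea integral over $t\in\mathbb{R}$ diverges, which is precisely the phenomenon that makes a one-ended parabolic quotient ``flare out'' to infinite volume. Verifying this monotonicity cleanly, via the Riccati comparison for the second fundamental form of horospheres with $R=R(\cdot,\nabla b)\nabla b\le 0$, and confirming that $\pi_1(C)$ is genuinely parabolic with a single fixed point, are the two places requiring care; both are standard in the theory of nonpositively curved manifolds (e.g.\ Eberlein, Heintze--Im Hof). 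I note that an alternative route mirrors Lemma \ref{cii}: a closed geodesic in the compact part of $\widehat M$ gives an axial element no power of which can be parabolic, which a pigeonhole argument shows is incompatible with finite index; but the volume argument above is the more self-contained one, as it uses the finite-volume hypothesis directly.
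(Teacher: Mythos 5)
Your coarea estimate is fine as far as it goes, but the reduction that precedes it --- ``$C$ is a cusp cross section, so $\pi_1(C)$ stabilizes a horoball and hence consists of parabolic isometries'' --- is a genuine gap, and in the curvature range $-1\leq K\leq 0$ of this lemma it is actually false. Take $M=\Sigma\times S^1$, where $\Sigma$ is a once-punctured torus with a complete, finite-area hyperbolic metric. This $M$ satisfies every hypothesis of the lemma, its single end has torus cross section $C$, and inside $\Isom(\mathbb{H}^2\times\mathbb{R})$ the group $\pi_1(C)\cong\mathbb{Z}^2$ is generated by a parabolic isometry together with the Clifford translation $(x,t)\mapsto(x,t+1)$, which is \emph{axial}. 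This example also refutes your auxiliary claim that a nontrivial isometry stabilizing a horoball cannot be axial: the Clifford translation preserves the horoball $(\text{horoball in }\mathbb{H}^2)\times\mathbb{R}$, and even every horosphere centered at the corresponding ideal point, because that ideal point is not an endpoint of its axes. For the same reason your proposed alternative route (a closed geodesic gives an axial element no power of which lies in $\pi_1(C)$) collapses: in nonpositive curvature the end group can contain axial elements, so that argument --- which is the one used for Lemma \ref{cii} --- needs the visibility hypothesis available there, not here.

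The paper's proof runs differently and makes no claim about cusps: since $\pi_1(C)\cong\mathbb{Z}^2$ is abelian, the structure theory of isometry groups of Hadamard manifolds (Theorem 7.1 in \cite{BH}) gives a dichotomy --- either $\pi_1(C)$ preserves the horospheres centered at some point at infinity (with no assertion that its elements are parabolic), or $\pi_1(C)$ consists of commuting hyperbolic isometries preserving a common flat $F\cong\mathbb{R}^2$ --- and in either case $\widehat{M}=\widetilde{M}/\pi_1(C)$ has infinite volume. Your coarea-plus-monotonicity argument is exactly what is needed in the first case, and note that it only uses invariance of the horospheres, not parabolicity. But the second case is simply absent from your proof, and it cannot be waved away: there is no invariant horofunction in that case at all. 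One handles it instead via the normal exponential map off $F$, which in nonpositive curvature is a volume-nondecreasing diffeomorphism onto $\widetilde{M}$, so that $\vol(\widehat{M})\geq\vol\bigl((F/\mathbb{Z}^2)\times\mathbb{R}\bigr)=\infty$. To repair your write-up, replace the cusp claim by the appeal to the abelian dichotomy and add the flat case.
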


\begin{proof}
Since $i_*$ is injective, $\pi_1(C)$ embeds as a subgroup of $\pi_1(M)$. Let $\widehat{M}:=\widetilde{M}/\pi_1(C)$ be the covering space of $M$  corresponding to $\pi_1(C)$. Since $\pi_1(C) \cong \Z^2$, which is abelian, either there is a point $\sigma\in\partial_\infty\M$ such that $\pi_1(C)$ preserves the horospheres centered at $\sigma$, or $\pi_1(C)$ consists of commuting hyperbolic isometries that preserve a common flat (see Theorem 7.1 in \cite{BH}). In either case, $\widehat{M}$ does not have finite volume. Therefore $[\pi_1(M)\colon\pi_1(C)]=\infty$.
\end{proof}

We give an affirmative answer to Question \ref{ad} in dimension three. 

\begin{theorem}\label{adotm}
Let $M$ be a $3$-dimensional, complete, finite volume, Riemannian manifold with sectional curvature $-1\leq K \leq 0$. Suppose that $\pi_1(M)$ is finitely generated. Then the action dimension $\actdim(\pi_{1}(M))=3$.
\end{theorem}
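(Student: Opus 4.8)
The plan is to prove Theorem \ref{adotm} by following the same strategy as the proof of Proposition \ref{adovm}, using Lemma \ref{fgcnba} as the replacement for Lemma \ref{cii}. First I would dispose of the closed case trivially, and then assume $M$ is open. Since $\pi_1(M)$ acts freely on the contractible universal cover $\widetilde M^3$, we immediately get the upper bound $\actdim(\pi_1(M))\leq 3$. The whole content is therefore the lower bound: I want to rule out $\actdim(\pi_1(M))\leq 2$.

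To do this I would argue by contradiction. Suppose $\pi_1(M)$ acts properly discontinuously on a contractible manifold $N^2$ of dimension at most $2$. Because $M$ has nonpositive curvature, $\pi_1(M)$ is torsion free, so the action is in fact free and the quotient $N^2/\pi_1(M)$ is an aspherical manifold with fundamental group $\pi_1(M)$. Passing to the orientation double cover if needed, I may take $M$ orientable so that Lemma \ref{fgcnba} applies: for each boundary component $C$ (a $2$-torus, by remark (b)) we have $[\pi_1(M):\pi_1(C)]=\infty$, and $i_*\colon\pi_1(C)\to\pi_1(M)$ is injective by remark (c). Now let $\overline N$ be the cover of $N^2/\pi_1(M)$ corresponding to the subgroup $\pi_1(C)\cong\Z^2$. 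Since $C$ is a closed aspherical $2$-manifold, its top homology $H_2(C)=H_2(\pi_1(C))$ is nonzero, and because $\overline N$ is aspherical with $\pi_1(\overline N)\cong\pi_1(C)$ we get $H_2(\overline N)=H_2(\pi_1(C))\neq 0$. A noncompact surface (or $1$-manifold) has vanishing top homology, so $\overline N$ must be compact, forcing the covering $\overline N\to N^2/\pi_1(M)$ to be finite and hence $[\pi_1(M):\pi_1(C)]<\infty$. This contradicts Lemma \ref{fgcnba}, and the theorem follows.

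The main subtlety, and the reason I would be careful, is the orientation issue combined with the dimension count. The reduction to the orientable case is needed so that remarks (a)--(c) and Lemma \ref{fgcnba} apply verbatim, but passing to a finite-index subgroup does not change the action dimension here (the same covering-space argument runs for any finite-index subgroup, exactly as noted in the remark after Proposition \ref{adovm}). One must also confirm that $\overline N$ is genuinely aspherical of dimension $\leq 2$ so that $H_2(\overline N)\cong H_2(\pi_1 C)$: this is where the hypothesis $\dim N\leq 2$ is essential, since otherwise the identification of $H_2(\overline N)$ with group homology could fail. I expect the only real obstacle to be bookkeeping of these hypotheses rather than any deep new idea, as the geometric heart — that a $\Z^2$ peripheral subgroup has infinite index because a finite-volume quotient by $\Z^2$ is impossible in nonpositive curvature — is already isolated in Lemma \ref{fgcnba}.
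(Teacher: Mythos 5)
Your proposal is correct, but it reaches the contradiction by a different mechanism than the paper's own proof of this theorem. After the common setup (upper bound from the free action on $\widetilde M$, reduction to the orientable open case, contradiction hypothesis giving a free, properly discontinuous action on a contractible manifold of dimension at most $2$), the paper argues in a dimension-specific way: a contractible $2$-manifold is homeomorphic to $\mathbb{R}^2$, so $\pi_1(M)$ would be a surface group $\pi_1(\Sigma)$; since it contains $\pi_1(C)\cong\Z^2$, the surface $\Sigma$ must be the $2$-torus or the Klein bottle, and in either of those groups any $\Z^2$ subgroup has finite index --- contradicting Lemma \ref{fgcnba}. You instead rerun, verbatim, the covering-space argument from the proof of Proposition \ref{adovm}: pass to the cover $\overline N$ of $N^2/\pi_1(M)$ corresponding to $\pi_1(C)$, note $\overline N$ is aspherical with $H_2(\overline N)\cong H_2(\Z^2)\neq 0$, conclude $\overline N$ is compact (noncompact surfaces have vanishing $H_2$), hence the cover is finite-sheeted and $[\pi_1(M):\pi_1(C)]<\infty$, the same contradiction. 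Both arguments are sound, and your handling of the side issues (torsion-freeness from nonpositive curvature, the orientation double cover, monotonicity of action dimension under passage to finite-index subgroups, asphericality of $\overline N$) is correct. What each buys: the paper's route is shorter because it exploits facts special to dimension two (classification of contractible $2$-manifolds and of surface groups containing $\Z^2$); your route avoids all surface classification and makes transparent that Theorem \ref{adotm} is an instance of the same general machinery as Proposition \ref{adovm}, requiring only that the peripheral subgroup is the fundamental group of a closed aspherical codimension-one manifold, is $\pi_1$-injective, and has infinite index. The only slip is cosmetic: for a $1$-manifold the relevant vanishing is of $H_2$ (trivially), not of its ``top homology'' $H_1$; but that degenerate case is already excluded since no $1$-manifold has fundamental group $\Z^2$.
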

 
\begin{proof}
Firstly we assume that $M$ is orientable. It is clear that $\actdim(\pi_{1}(M))\leq 3$ since $\pi_{1}(M^3)$ acts freely on the contractible manifold $\widetilde{M}$. If $M^3$ is closed, then $\actdim(\pi_{1}(M^3))=3$. Assume that $M^3$ is open. Suppose that $\pi_1(M)$ acts properly discontinuously on a contractible $2$-manifold, i.e. $\mathbb{R}^2$. Hence $\pi_1(M) \cong \pi_1(\Sigma)$ for some surface $\Sigma$. Since $\pi_1(M)$ contains a subgroup $\pi_1(C) \cong \mathbb{Z}^2$, it follows that $\Sigma$ must be either the $2$-torus or the Klein bottle.  In either case $\pi_1(C)$ has finite index in $\pi_1(M)$, which contradicts Lemma \ref{fgcnba}.

If $M$ is unorientable. Let $\overline{M}$ be the double covering space of $M$, which is orientable. From the definition it is clear that $\actdim(\pi_{1}(\overline{M}))\leq \actdim(\pi_{1}(M))\leq 3$. The argument in the paragraph above gives that $\actdim(\pi_{1}(\overline{M}))=3$. Therefore, $\actdim(\pi_{1}(M))=3$.   
\end{proof}

\bibliographystyle{amsplain}
\bibliography{locsym}

\end{document}